\newcommand{\N}{\mathbb{N}}
\newcommand{\R}{\mathbb{R}}
\newcommand{\C}{\mathbb{C}}
\newcommand{\s}{\mathbb{S}}
\newcommand{\h}{\mathbb{H}}
\newcommand{\E}{\mathrm{E}}
\newcommand{\Sl}{\mathrm{Sl}}
\newcommand{\pRe}{\mathrm{Re}}
\newcommand{\pIm}{\mathrm{Im}}
\newcommand{\df}{\,\mathrm{d}}
\newcommand{\rot}{\mathrm{Rot}}
\newcommand{\prodesc}[2]{\left\langle #1, #2 \right \rangle}
\newcommand{\abs}[1]{\left\lvert #1 \right\rvert}
\DeclareMathOperator{\arctanh}{arctanh}
\DeclareMathOperator{\arcsinh}{arcsinh}
\newtheorem{theorem}{Theorem}
\newtheorem{proposition}{Proposition}
\newtheorem{corollary}{Corollary}
\newtheorem{lemma}{Lemma}
\theoremstyle{definition}
\theoremstyle{remark}
  \newtheorem{remark}{Remark}
\numberwithin{equation}{section}
\title[Rotationally invariant CMC surfaces in homogeneous $3$-manifolds]{Rotationally invariant constant mean curvature surfaces in homogeneous $3$-manifolds}
\author{Francisco Torralbo}
\address{Departamento de Geometr\'{\i}a  y Topolog\'{\i}a \\
Universidad de Granada \\
18071 Granada, SPAIN} \email{ftorralbo@ugr.es}
\thanks{Research partially supported by a MCyT-Feder research project MTM2007-61775 and a Junta Andalucía  Grant P06-FQM-01642.}
\subjclass[2000]{Primary 53C42; Secondary 53C30}
\keywords{Surfaces, constant mean curvature, homogenous 3-manifolds, rotationally invariant surface, Berger spheres}
\begin{document}
  \begin{abstract}
   We classify constant mean curvature surfaces invariant by a $1$-parameter group of isometries in the Berger spheres and in the special linear group $\Sl(2,\R)$. In particular, all constant mean curvature spheres in those spaces are described explicitly, proving that they are not always embedded. Besides new examples of Delaunay-type surfaces are obtained. Finally the relation between the area and volume of these spheres in the Berger spheres is studied, showing that, in some cases, they are not solution to the isoperimetric problem.
  \end{abstract}
  
\maketitle

\section{Introduction}
In the last years, constant mean curvature surfaces of the homogeneous Riemannian $3$-manifolds have been deeply studied. The starting point was the work of Abresch and Rosenberg~\cite{ARb}, where they found a holomorphic quadratic differential in any constant mean curvature surface of a homogeneous Riemannian $3$-manifold with isometry group of dimension $4$. Berger spheres, the Heisenberg group, the special linear group $\Sl(2,\R)$ and the Riemannian product $\s^2 \times \R$ and $\h^2 \times \R$, where $\s^2$ and $\h^2$ are the $2$-dimensional sphere and hyperbolic plane, are the most relevant examples of such homogeneous $3$-manifolds.
\medskip

Abresch and Rosenberg~\cite{ARa} proved that a complete constant mean curvature surface in $\s^2 \times \R$ and $\h^2 \times \R$ with vanishing Abresch-Rosenberg differential must be \emph{rotationally} invariant (that is, invariant under a $1$-parameter group of isometries acting trivially on the fiber). Moreover, do Carmo and Fernández~\cite[Theorem 2.1]{dCF} showed that, even locally, every constant mean curvature surface in $\s^ 2 \times \R$ or $\h^2 \times \R$ with vanishing Abresch-Rosenberg differential must be rotationally invariant too. Finally, Espinar and Rosenberg~\cite{ER} for every homogeneous Riemannian spaces with isometry group of dimension $4$, proved that every constant mean curvature surface with vanishing Abresch-Rosenberg differential must be invariant by a $1$-parameter group of isometries.
\medskip

Constant mean curvature surfaces invariant by a $1$-parameter group of isometries were studied in the product spaces $\s^2 \times \R$ and $\h^2 \times \R$ by Hsiang and Hsiang~\cite{HH89} and Pedrosa and Ritoré~\cite{PR99}. Also, in the Heisenberg group, the study was made by Tomter~\cite{To}, Figueroa, Mercuri and Pedrosa~\cite{FMP} and Caddeo, Piu and Ratto~\cite{CPR}. Tomter described explicitly in~\cite{To} the constant mean curvature spheres computing their volume and area in order to give an upper bound for the isoperimetric profile of the Heisenberg group. The authors in~\cite{FMP} studied not only the \emph{rotationally} invariant case, but the surfaces invariant by any closed $1$-parameter group of isometries of the Heisenberg group, and organized most of the results that had appeared in the literature. In the special linear group $\Sl(2,\R)$ the classification was obtained by Gorodsky~\cite{G} and, very recently, the classification was made in the universal cover of $\Sl(2,\R)$ by Espinoza~\cite{E}.
\medskip

The aim of this paper is to classify the constant mean curvature surfaces invariant by a $1$-parameter group of isometries that fix a curve, that is, \emph{rotationally invariant}, in the Berger spheres (Theorem~\ref{tm:clasificacion-berger}). 
In this classification it turns out that constant mean curvature spheres are not always embeddded (see figure~\ref{fig:immersed-cmc-sphere}) contradicting the result announced by Abresch and Rosenberg in~\cite[Theorem 6]{ARb}. Besides, we obtain some new examples of surfaces similar to the Delaunay constant mean curvature surfaces in $\R^3$. Moreover, since we obtain an explicit immersion for the constant mean curvature sphere (see Cororally~\ref{cor:sphere-CMC-berger}), we analyse the relation between the area and the volume of the constant mean curvature spheres and show that, for some Berger spheres, they are not the best candidates to solve the isoperimetric problems. Finally some Delaunay-type surfaces give rise, in some Berger spheres, to embedded minimal tori which are not the Clifford torus, proving that the Lawson conjecture is not true in some Berger spheres (see Remark~\ref{rm:clasificacion-berger}.(2)).
\medskip

Using the same techniques, and giving a sketch of the proofs, we classify rotationally invariant constant mean curvature surfaces in $\Sl(2,\R)$ (see Theorem~\ref{tm:clasificacion-Sl(2,R)}), and we obtain an explicit description for the constant mean curvature spheres, showing that they are not always embedded (see figure~\ref{fig:sl-embebidas}). Although the classification in $\Sl(2,\R)$ was made by Gorodsky in~\cite{G}, there exist a mistake in \cite[Theorem 2.(b)]{G} where he claims that for every $H > 0$ there exists a sphere with  constant mean curvature $H$, something that is actually false (see Remark~\ref{rm:clasificacion-Sl(2,R)}.(1)).


\section[CMC surfaces in homogeneous spaces]{Constant mean curvature surfaces in the homogeneous spaces}
Let $N$ be a homogeneous Riemannian $3$-manifold with isometry group of dimension $4$. Then there exists a Riemannian submersion $\Pi:N\rightarrow M^2(\kappa)$, where $M^2(\kappa)$ is a 2-dimensional simply connected space form of constant curvature $\kappa$, with totally geodesic fibers and there exists a unit Killing field $\xi$ on $N$ which is vertical with respect to $\Pi$. We will assume that $N$ is oriented, and we can define a cross product $\wedge$, such that if $\{e_1,e_2\}$ are linearly independent vectors at a point $p$, then $\{e_1,e_2,e_1\wedge e_2\}$ is the orientation at $p$. If $\bar{\nabla}$ denotes the Riemannian connection on $N$, the properties of $\xi$ imply (see \cite{D}) that for any vector field $V$
\begin{equation}\label{eq:killing-property}
\bar{\nabla}_V\xi=\tau(V\wedge\xi),
\end{equation}
where the constant $\tau$ is the bundle curvature. As the isometry group of $N$ has dimension 4, $\kappa-4\tau^2\not=0$. The case $\kappa-4\tau^2=0$ corresponds to $\s^3$ with its standard metric if $\tau\not=0$ and to the Euclidean space $\R^3$ if $\tau=0$, which have isometry groups of dimension $6$.

 In our study we are going to deal mainly with the Berger spheres, which correspond to $\kappa > 0$ and $\tau \neq 0$, and with the special linear group $\Sl(2, \R)$, which correspond to $\kappa < 0$ and $\tau \neq 0$. The fibration in both cases is by circles.

\begin{quote}
\emph{Along the paper $\E(\kappa,\tau)$ will denote an oriented homogeneous Riemannian $3$-manifold with isometry group of dimension $4$, where $\kappa$ is the curvature of the basis, $\tau$ the bundle curvature (and therefore $\kappa-4\tau^2\not=0$).}
\end{quote}

Now, let $\Phi:\Sigma\rightarrow \E(\kappa, \tau)$ be an immersion of an orientable surface $\Sigma$ and $N$ a unit normal vector field. We define the function $C:\Sigma\rightarrow \R$ by
\[
C=\langle N,\xi\rangle,
\]
where $\langle,\rangle$ denotes the metric in $\E(\kappa, \tau)$, and also the metric of $\Sigma$. It is clear that $C^2\leq 1$.
\medskip

Suppose now  that the immersion $\Phi$ has {\em constant mean curvature}. Consider on $\Sigma$ the structure of Riemann surface associated to the induced metric and let $z=x+iy$ be a conformal parameter on $\Sigma$. Then, the induced metric is written as $e^{2u}|dz|^2$ and we denote by $\partial_z=(\partial_x-i\partial_y)/2$ and $\partial_{\bar{z}}=(\partial_x+i\partial_y)/2$ the usual operators.

For these surfaces, the Abresch-Rosenberg quadratic differential $\Theta$, defined by
\[
\Theta(z)=\left(\langle\sigma(\partial_z,\partial_z),N\rangle-
\frac{(\kappa-4\tau^2)}{2(H+i\tau)}\langle\Phi_z,\xi\rangle^2\right)(dz)^2,
\]
where $\sigma$ is the second fundamental form of the immersion, is holomorphic (see \cite{ARb}). We denote  $p(z)=\langle \sigma(\partial_z,\partial_z),N\rangle$ and $A(z)=\langle\Phi_z,\xi\rangle$.

\begin{proposition}[\cite{D, FM}] \label{prop:integrability-conditions}
The fundamental data $\{u,C,H,p,A\}$ of a constant mean curvature immersion $\Phi:\Sigma\rightarrow \E(\kappa, \tau) $  satisfy the following integrability conditions:
\begin{equation}\label{eq:integrability-conditions}
\begin{aligned}
p_{\bar{z}}&=\frac{e^{2u}}{2}(\kappa - 4\tau^2)CA,&
A_{\bar{z}}&=\frac{e^{2u}}{2}(H+i\tau)C,\\
C_z&=-(H-i\tau)A-2e^{-2u}\bar{A}p,&
|A|^2&=\frac{e^{2u}}{4}(1-C^2).
\end{aligned}
\end{equation}
Conversely, if $u,C:\Sigma\rightarrow\R$ with $-1\leq C\leq 1$ and $p,A:\Sigma\rightarrow\C$ are functions on a simply connected surface $\Sigma$ satisfying equations \eqref{eq:integrability-conditions}, then there exists a unique, up to congruences, immersion $\Phi:\Sigma\rightarrow\E(\kappa, \tau)$ with constant mean curvature $H$ and whose fundamental data are $\{u,C,H,p,A\}$.
\end{proposition}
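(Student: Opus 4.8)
This is a Bonnet-type theorem, so I would split the argument into a \emph{necessity} part (the fundamental data of any constant mean curvature immersion satisfy \eqref{eq:integrability-conditions}) and a \emph{sufficiency} part (any data solving \eqref{eq:integrability-conditions} on a simply connected $\Sigma$ integrate to an immersion). For necessity the plan is a direct moving-frame computation; for sufficiency the plan is to reduce to the general fundamental theorem of surfaces in $\E(\kappa,\tau)$.

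For necessity I would work with the frame $\{\Phi_z,\Phi_{\bar z},N\}$. Two bookkeeping facts drive everything. First, in the conformal coordinate $z$ the only non-trivial second-order derivatives are $\bar\nabla_{\partial_z}\Phi_z = 2u_z\,\Phi_z + p\,N$ and $\bar\nabla_{\partial_z}\Phi_{\bar z} = \tfrac{e^{2u}}{2}H\,N$, while Weingarten gives $\bar\nabla_{\partial_z}N = -H\,\Phi_z - 2e^{-2u}p\,\Phi_{\bar z}$. Second, $\Phi_z\wedge\Phi_{\bar z} = \tfrac{i}{2}e^{2u}N$, and the orthogonal splitting $\xi = C\,N + \xi^{T}$ holds with $\xi^{T} = 2e^{-2u}\bigl(\bar A\,\Phi_z + A\,\Phi_{\bar z}\bigr)$. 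The algebraic relation $|A|^2 = \tfrac{e^{2u}}{4}(1-C^2)$ is then immediate from $1 = |\xi|^2 = C^2 + |\xi^{T}|^2$. Differentiating $A = \langle\Phi_z,\xi\rangle$ and $C = \langle N,\xi\rangle$, substituting the frame derivatives together with $\bar\nabla_V\xi = \tau(V\wedge\xi)$ from \eqref{eq:killing-property}, and evaluating the resulting triple products with the two facts above, yields the formulas for $A_{\bar z}$ and $C_z$; the imaginary parts $i\tau$ come precisely from the $\wedge\xi$ terms. Finally $p_{\bar z} = \partial_{\bar z}\langle\bar\nabla_{\partial_z}\Phi_z,N\rangle$ is handled via the Codazzi equation, which requires the ambient curvature tensor $\bar R$ of $\E(\kappa,\tau)$; since $\bar R$ is completely determined by $\kappa$, $\tau$ and $\xi$, contracting $\bigl(\bar R(\partial_z,\partial_{\bar z})\partial_z\bigr)^{\perp}$ against $N$ produces exactly the factor $(\kappa-4\tau^2)CA$.

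For sufficiency I would not integrate by hand but invoke the fundamental theorem of surfaces in $\E(\kappa,\tau)$ (Daniel, \cite{D}). Given $\{u,C,H,p,A\}$ solving \eqref{eq:integrability-conditions}, reconstruct the real data on $\Sigma$: the metric $e^{2u}|dz|^2$, the shape operator read off from $H$ and $p$, the candidate tangential field $T := 2e^{-2u}(\bar A\,\partial_z + A\,\partial_{\bar z})$, and $\nu := C$. The point is that the four equations in \eqref{eq:integrability-conditions} are, after this translation, exactly the Gauss equation, the Codazzi equation and the two $\xi$-compatibility equations, together with $|T|^2 = 1-\nu^2$, required by that theorem; since $\Sigma$ is simply connected one obtains an isometric immersion into $\E(\kappa,\tau)$, unique up to an ambient isometry, with the prescribed mean curvature $H$ and fundamental data.

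The main obstacle is the sufficiency direction. If one is content to cite the existence theorem, the remaining work is purely the dictionary between the complex formulation $\{u,C,H,p,A\}$ and the real $(\mathrm{I},\mathrm{II},T,\nu)$ formulation, plus the verification that no compatibility equation is lost in passing to complex notation. If instead one wants a self-contained argument, one must build the Lie-algebra-valued (equivalently $\mathfrak{so}(3)\oplus\R^3$-valued) connection form encoding $\{u,C,H,p,A\}$ on a trivial frame bundle over $\Sigma$ and prove its flatness; its curvature vanishes iff \eqref{eq:integrability-conditions} hold together with the structure equations of $\E(\kappa,\tau)$, and the integration then uses simple connectivity. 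Either way, the necessity part is routine once the curvature tensor of $\E(\kappa,\tau)$ has been written down.
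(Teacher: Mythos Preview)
Your outline is sound, but note that the paper does not actually prove this proposition: it is stated with attribution to \cite{D, FM} and no argument is given. Your necessity computation is the standard moving-frame derivation one finds in those references, and for sufficiency you yourself defer to Daniel's fundamental theorem \cite{D}, which is precisely one of the sources the paper is citing. So there is nothing to compare against in the paper itself; your proposal simply fills in what the paper chose to quote from the literature.
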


Given a constant mean curvature surface $\Sigma$ with vanishing Abresch-Rosenberg differential we know that it must be invariant by a $1$-parameter group of isometries (see~\cite{ARa, dCF, ER}).

Now we will restrict our attention to constant mean curvature spheres $S$, which will be treated in a uniform way for all $\E(\kappa, \tau)$. The advantage of using this aproach is that we will obtain a global formula for the area of the constant mean curvature spheres in terms of $\kappa$ and $\tau$ (see Proposition~\ref{prop:area-esferas}).  In this case using~\eqref{eq:integrability-conditions} and taking into account that $\Theta = 0$ we get
\[
\begin{split}
 C_z &= \frac{-(H-i\tau)A}{4(H^2 + \tau^2)}[4(H^2 + \tau^2) + (\kappa - 4\tau^2)(1-C^2)], \\ 
 C_{z\bar{z}} &= \frac{-e^{2u}}{32(H^2 + \tau^2)}C[4(H^2 + \tau^2) + (\kappa - 4\tau^2)(1-C^2)]^2
\end{split}
\]
Because $[4(H^2 + \tau^2) + (\kappa - 4\tau^2)(1-C^2)] (> 4H^2 + \kappa) > 0$ the only critical points of $C$ appear where $A$ vanish, i.e., taking into account~\eqref{eq:integrability-conditions} when $C^2(p) = 1$. But the Hessian of $C$ is given by $(H^2 + \tau^2)^2 > 0$ (except for minimal spheres in $\s^2 \times \R$, but in that case the sphere is the slice $\s^2 \times \{t_0\} \subset \s^2 \times \R$) so all critical points are non degenerate. Hence, $C$ is a Morse function on $S$ and so it has only two critical points $p$ and $q$ which are the absolute maximum and minimum of $C$. The function $v: S \rightarrow \R$ given by $v = \arctanh C$ is a harmonic function from~\eqref{eq:integrability-conditions} with singularities at $p$ and $q$ and without critical points. Now there exist a global conformal parameter $w = x + i y$ over $S$ such that $v(w) = \pRe(w) = x$. In this new global conformal parameter the function $C$ is $C(x) = \tanh(x)$ and so it is not difficult to check that the conformal factor of the metric can be written as:
\[
 e^{2u(x)} = \frac{16(H^2 + \tau^2) \cosh^2 x}{[4(H^2 + \tau^2)\cosh^2 x + (\kappa - 4\tau^2)]^2}, \qquad x \in \R
\]

Now, to obtain the area of the constant mean curvature sphere it is sufficient to integrate the above function for $x \in \R$ and $y \in [0, T]$, where $T$ must be $2\pi$ since, by the Gauss-Bonnet theorem,
\[
 4\pi = \int_S K \df A = T \int_{\R} e^{2u(x)} K \df x = -T \int_{\R} u''(x) \df x = 2T.
\]
Then, the area is given by
\[
\textrm{Area}(S) = \int_0^{2\pi} \int_\R e^{2u(x)}\,\mathrm{d}x\,\mathrm{d}y = 2\pi\int_\R e^{2u(x)}\,\mathrm{d}x
\]
and a straightforward computation yields the following lemma.

\begin{proposition}\label{prop:area-esferas}
 The area of a constant mean curvature sphere $S$ in $\E(\kappa, \tau)$ is given by:
\[
	\textrm{Area}(S) = 
	\begin{cases}
	 	\displaystyle{\frac{8\pi}{4H^2 + \kappa} \left[ 1 + \frac{4(H^2 + \tau^2)}{\sqrt{4H^2 + \kappa}\sqrt{4\tau^2 - \kappa}} \arctan\left(\frac{\sqrt{4\tau^2 - \kappa}}{\sqrt{4H^2 + \kappa}} \right)\right]}, & \text{if } \kappa - 4\tau^2 < 0, \\ \\
		\displaystyle{\frac{8\pi}{4H^2 + \kappa} \left[ 1 + \frac{4(H^2 + \tau^2)}{\sqrt{4H^2 + \kappa}\sqrt{\kappa - 4\tau^2}} \arctanh\left(\frac{\sqrt{\kappa - 4\tau^2}}{\sqrt{4H^2 + \kappa}} \right)\right]}, & \text{if } \kappa - 4\tau^2 > 0
	\end{cases}
		\label{eq:area-CMC-sphere}
	\]
	where $H$ is the mean curvature of $S$.
\end{proposition}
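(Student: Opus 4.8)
The plan is simply to evaluate the integral $\mathrm{Area}(S)=2\pi\int_{\R}e^{2u(x)}\,\df x$ with the conformal factor already computed above. I would abbreviate $a=H^{2}+\tau^{2}$ and $b=\kappa-4\tau^{2}$, so that $c:=4a+b=4H^{2}+\kappa$; recall that $a>0$ (the excluded minimal-sphere case being the slice $\s^{2}\times\{t_{0}\}$) and $c>0$ by the inequality used above. The integrand is then $16a\cosh^{2}x/(4a\cosh^{2}x+b)^{2}$. Performing the substitution $t=\tanh x$, with $\df x=\df t/(1-t^{2})$ and $\cosh^{2}x=1/(1-t^{2})$, a short computation collapses everything — the factors $1-t^{2}$ cancel completely — to the elementary rational integral
\[
\mathrm{Area}(S)=2\pi\int_{-1}^{1}\frac{16a}{\,(c-bt^{2})^{2}\,}\,\df t .
\]

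Next I would compute $\int_{-1}^{1}\df t/(c-bt^{2})^{2}$ by the standard reduction $\int\df t/(c-bt^{2})^{2}=t/[2c(c-bt^{2})]+(1/2c)\int\df t/(c-bt^{2})$, distinguishing the two cases: for $b<0$ (i.e. $\kappa-4\tau^{2}<0$) the remaining integral is an $\arctan$, and for $b>0$ it is an $\arctanh$. The integrand is even, so only twice the value of the antiderivative at $t=1$ survives; using $c+|b|=4a$ in the first case and $c-b=4a$ in the second, the algebraic boundary term simplifies to $1/(4ac)$, and after multiplying through by $16a$ one obtains $4/c+16a/(c\sqrt{c|b|})\arctan\sqrt{|b|/c}$ respectively the analogous expression with $\arctanh\sqrt{b/c}$. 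Multiplying by $2\pi$ and rewriting $a,b,c$ back in terms of $H,\kappa,\tau$ — noting $|b|=4\tau^{2}-\kappa$ in the first case — yields precisely the two claimed formulas.

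The only point requiring a word of care is the case $b>0$: the antiderivative involving $\arctanh$ is valid only as long as $c-bt^{2}>0$ and $\sqrt{b/c}\,|t|<1$ on $(-1,1)$, and both hold because $c-b=4(H^{2}+\tau^{2})>0$, which also makes the argument of $\arctanh$ in the final formula legitimate. The degenerate case $b=0$ never arises, since $\kappa-4\tau^{2}\neq0$ for $\E(\kappa,\tau)$. Beyond this, the proposition is entirely a bookkeeping exercise in integration once the formula for $e^{2u(x)}$ is in hand, so I do not anticipate any genuine obstacle.
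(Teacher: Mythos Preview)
Your proposal is correct and is exactly the ``straightforward computation'' the paper alludes to but does not write out: the paper sets up $\mathrm{Area}(S)=2\pi\int_{\R}e^{2u(x)}\,\df x$ with the same conformal factor and then simply asserts the result. Your substitution $t=\tanh x$ and the standard reduction for $\int\df t/(c-bt^{2})^{2}$ are the natural way to carry this out, and your check that $c-b=4a>0$ (ensuring both the validity of the $\arctanh$ antiderivative and the simplification of the boundary term in \emph{both} sign cases) is the only subtlety worth flagging.
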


\begin{remark}
 The same formula was already obtained for constant mean curvature spheres in the Heisenberg group with $\kappa = 0$ and $\tau = 1$ by~\cite[Proposition 5]{To} and in $M^2(\kappa) \times \R$ by~\cite{Pedrosa04} when $\kappa = 1$ and by~\cite{HH89} when $\kappa = -1$. It is important to remark that in \cite{To, Pedrosa04} the mean curvature is the trace of the second fundamental form while here the mean curvature is half of it.
\end{remark}

\section{The berger spheres}

A Berger sphere is a usual $3$-sphere $\s^3 = \{(z, w) \in \C^2:\, \abs{z}^2 + \abs{w}^2 = 1\}$ endowed with the metric
\[
 g(X, Y) = \frac{4}{\kappa}\left[\prodesc{X}{Y} + \left(\frac{4\tau^2}{\kappa} - 1\right)\prodesc{X}{V}\prodesc{Y}{V} \right]
\]
where $\prodesc{\,}{\,}$ stands for the usual metric on the sphere, $V_{(z, w)} = (iz, iw)$, for each $(z, w) \in \s^3$ and $\kappa$, $\tau$ are real numbers with $\kappa > 0$ and $\tau \neq 0$. For now on we will denote the Berger sphere $(\s^2,g)$ as $\s^3_b(\kappa, \tau)$, which is a model for a homogeneous space $\E(\kappa, \tau)$ when $\kappa > 0$ and $\tau \neq 0$. In this case the vertical Killing field is given by $\xi = \frac{\kappa}{4\tau}V$. We note that $\s^3_b(4, 1)$ is the round sphere.

The group of isometries of $\s^3(\kappa, \tau)$ is $U(2)$. The next proposition classifies, up to conjugation, the $1$-parameter groups of $U(2)$ into two types. 

\begin{proposition}\label{prop:classificacion-subgroups-sphere}
A $1$-parameter group of $U(2)$, up to conjugation and reparametrization, must be one of the following types:
\begin{enumerate}[(i)]
	\item $\mathrm{Rot} = \left\{\begin{pmatrix}
									 1	&	0	\\
										0			& e^{it}
									\end{pmatrix}:\, t \in \R\right\}$
	\item $\left\{\begin{pmatrix}
									 e^{i \alpha t}	&	0	\\
										0			& e^{it}
									\end{pmatrix}:\, t \in \R\right\}$, with $\alpha \in \R\setminus\{0\}$.
\end{enumerate}
\end{proposition}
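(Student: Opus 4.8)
The plan is to translate the problem into the Lie algebra $\mathfrak{u}(2)$ of skew-Hermitian $2\times 2$ matrices. Any $1$-parameter group of $U(2)$ is the image of a continuous homomorphism $\gamma\colon\R\to U(2)$, and such a homomorphism is necessarily of the form $\gamma(t)=\exp(tX)$ with $X=\gamma'(0)\in\mathfrak{u}(2)$; moreover conjugating $\gamma$ by $P\in U(2)$ replaces $X$ by $P^{-1}XP$. So the first step is to bring $X$ into a normal form under unitary conjugation. Since $iX$ is Hermitian, the spectral theorem provides $P\in U(2)$ with $P^{-1}XP=\mathrm{diag}(i\lambda_1,i\lambda_2)$ for some $\lambda_1,\lambda_2\in\R$; hence, after conjugation, $\gamma(t)=\mathrm{diag}(e^{i\lambda_1 t},e^{i\lambda_2 t})$.

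The second step is a short case discussion on the eigenvalues, using that we may still reparametrize $t$ and conjugate by the transposition matrix $\left(\begin{smallmatrix}0&1\\1&0\end{smallmatrix}\right)\in U(2)$, which interchanges the two diagonal entries. The case $\lambda_1=\lambda_2=0$ gives the trivial group and is discarded (a trivial group fixes every surface and is of no interest for the classification). If exactly one eigenvalue vanishes, say $\lambda_1=0\neq\lambda_2$ after a possible transposition, then the reparametrization $t\mapsto t/\lambda_2$ turns $\gamma$ into $\mathrm{diag}(1,e^{it})=\mathrm{Rot}$, which is type (i). If both eigenvalues are nonzero, the reparametrization $t\mapsto t/\lambda_2$ turns $\gamma$ into $\mathrm{diag}(e^{i\alpha t},e^{it})$ with $\alpha=\lambda_1/\lambda_2\in\R\setminus\{0\}$, which is type (ii). This exhausts all possibilities.

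There is no serious obstacle in this argument; the only points deserving care are that ``$1$-parameter group'' is understood as the image of a continuous homomorphism $\R\to U(2)$ (so that for irrational $\alpha$ the non-closed subgroups are genuinely allowed), and that the proposition asserts only that every $1$-parameter group is equivalent, under conjugation and reparametrization, to one on the list — the list is not irredundant, since $\mathrm{Rot}$ is the $\alpha\to 0$ degeneration of (ii) and the values $\alpha$ and $1/\alpha$ (obtained by a transposition) give conjugate groups. I would record these remarks but not dwell on them.
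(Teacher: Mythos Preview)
Your proof is correct and follows the same underlying strategy as the paper: pass to the infinitesimal generator $X\in\mathfrak{u}(2)$, diagonalize it by unitary conjugation, and then sort out the cases by reparametrization. The only difference is cosmetic---you invoke the spectral theorem for the Hermitian matrix $iX$ in one line, whereas the paper carries out the diagonalization by hand via a sequence of explicit conjugating matrices; your route is cleaner and loses nothing.
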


\begin{proof}
All $1$-parametric group of U(2) are generated, via the exponential map, by an element of the Lie algebra 
\[
\mathfrak{u}(2) = \left\{
					\begin{pmatrix}
						ia	&	x e^{iy}	\\
						-xe^{-iy}	&	ib
					\end{pmatrix}:\, a, b, x, y \in \R
				 \right\}
\]
We are going to reduce the possible $1$-parametric groups by conjugation. It is clear that given $A \in \mathfrak{u}(2)$ and $D \in U(2)$ then $A$ and  $D.A.D^{-1}$ are conjugated. So if $A = \left(\begin{smallmatrix}ia	&	x e^{iy}	\\
						-xe^{-iy}	&	ib\end{smallmatrix} \right)$, 
then taking $D = \left(\begin{smallmatrix}
1	&	0	\\
0	&	e^{iy}
\end{smallmatrix} \right)$ it follows that
\[
\begin{pmatrix}
1	&	0	\\
0	&	e^{iy}
\end{pmatrix}
\begin{pmatrix}
ia	&	x e^{iy}	\\
-xe^{-iy}	&	ib
\end{pmatrix}
\begin{pmatrix}
1	&	0	\\
0	&	e^{-iy}
\end{pmatrix}
=
\begin{pmatrix}
ia	&	x	\\
-x	&	ib
\end{pmatrix}
\]
Hence we may suppose that, up to conjugation, $y = 0$, i.e., $A = \left(\begin{smallmatrix} ia	&	x	\\ -x	&	ia\end{smallmatrix}\right)$. First, if $a = b$, taking $D = \frac{1}{\sqrt{2}} \left(\begin{smallmatrix}
 i & -1 \\
 1 & -i
\end{smallmatrix}
\right)$ we have
\[
\frac{1}{\sqrt{2}}
\begin{pmatrix}
 i & -1 \\
 1 & -i
\end{pmatrix}
\begin{pmatrix}
ia	&	x	\\
-x	&	ia
\end{pmatrix}
\frac{1}{\sqrt{2}}
\begin{pmatrix}
 -i & 1 \\
 -1 & i
\end{pmatrix}
=
\begin{pmatrix}
 i (a-x) & 0 \\
 0 & i (a+x)
\end{pmatrix}
\]
On the other hand if $a \neq b$ then taking $D = \left(\begin{smallmatrix} -\lambda  & i \mu  \\ -i \mu  & \lambda \end{smallmatrix}\right)$ where $\lambda, \mu \in \R$ such that $\lambda^2 + \mu^2 = 1$ and $\lambda \mu (a-b) = x(\lambda^2 - \mu^2)$, we have
\[
\begin{pmatrix}
 -\lambda  & i \mu  \\
 -i \mu  & \lambda
\end{pmatrix}
\begin{pmatrix}
ia	&	x	\\
-x	&	ib
\end{pmatrix}
\begin{pmatrix}
 -\lambda  & i \mu  \\
 -i \mu  & \lambda 
\end{pmatrix}
=
\begin{pmatrix}
i(a\lambda^2 + b\mu^2 +2x\lambda \mu)	&	0	\\
0	&	i(a\mu^2 + b\lambda^2 - 2x\lambda \mu)
\end{pmatrix}
\]
So we may always assume that, up to conjugation, every $1$-parameter group of $U(2)$ is generated by $\left(\begin{smallmatrix}i\alpha	&	0	\\ 0		&	i\beta \end{smallmatrix}\right)$
with $\alpha, \beta \in \R$. We note that we can interchange $\alpha$ and $\beta$ by conjugation. Via the exponential map this group becomes in 
$
t \mapsto
\left(
\begin{smallmatrix}
e^{it\alpha}	&	0	\\
0				&	e^{it\beta}
\end{smallmatrix}
\right)
$.

Finally if $\alpha = \beta = 0$ then we get the trivial group, if $\beta \neq 0$ we can reparametrize $t \mapsto t/\beta$ obtaining (i) if $\alpha = 0$ and (ii) if $\alpha \neq 0$. Both groups (i) and (ii) are not conjugated because their determinants do not coincide.  
\end{proof}

Among the two types of groups describe in the previous lemma the only $1$-parameter group of isometries of $U(2)$ which fix a curve is $\mathrm{Rot}$. It fixes the set $\ell = \{(z, 0) \in \s^3\}$ which is a great circle that we shall call in the sequel the \emph{axis of rotation}. The other type of group (i) is, for $\alpha = 1$, the traslation along the fiber and, for $\alpha \neq 1$, the composition of a rotation and translation along the fiber.
\medskip

In the Berger sphere $\s^3_b(\kappa, \tau)$, we will denote by $E^1_{(z, w)} = (-\bar{w},\bar{z})$ and $E^2_{(z, w)} = (-i\bar{w}, i \bar{z})$. Then $\{E^1,E^2,V\}$ is an orthogonal basis of $T\s^3_b(\kappa, \tau)$ which satisfies $\lvert E^1\rvert^2 = \lvert E^2\rvert^2 = 4/\kappa$ and $\abs{V}^2 = 16\tau^2/\kappa^2$. The connection $\nabla$ associated to $g$ is given by
\begin{align*}
  \nabla_{E_1} E_1 &= 0, &\nabla_{E_1} E_2 &= -V, &\nabla_{E_1} V &= \frac{4\tau^2}{\kappa}E_2 \\
  \nabla_{E_2} E_1 &= V, &\nabla_{E_2} E_2 &= 0, &\nabla_{E_2} V &= - \frac{4\tau^2}{\kappa} E_1 \\
  \nabla_{V} E_1 &= \left(\frac{4\tau^2}{\kappa} - 2\right)E_2, &\nabla_{V} E_2 &= -\left(\frac{4\tau^2}{\kappa} - 2\right)E_1, &\nabla_{V} V &= 0
\end{align*}

Let $\Phi: \Sigma \rightarrow \s^3_b(\kappa, \tau)$ be an immersion of an oriented constant mean curvature surface $\Sigma$ invariant by $\rot$. Then we can identify $\s^3_b(\kappa, \tau)/\rot$ with $\s^2$ and so $\Sigma$ is $\pi^{-1}(\gamma)$ for some smooth curve $\gamma \subseteq \s^2$. It is sufficient to consider that $\gamma$ is in the upper half sphere and it is parametrized by arc length in $\s^2$, i.e., $\gamma(s) = \bigl(\cos x(s) e^{iy(s)}, \sin x(s) \bigr)$, with $\cos x(s) > 0$ and $x'(s)^2 + y'(s)^2\sin^2 x(s) = 1$ for all $s \in I$. Then we can write down the immersion as $\Phi(s, t) = \bigl(\cos x(s) e^{iy(s)}, \sin x(s) e^{it} \bigr)$. A unit normal vector along $\Phi$ is given by
\[
 N = C \left\{ - \tau \pRe \left[\left(\frac{\tan \alpha}{\cos x} + i \tan x \right) e^{i(t + y)}\right] E^1_\Phi - \tau \pIm \left[\left(\frac{\tan \alpha}{\cos x} + i \tan x \right) e^{i(t + y)}\right] E^2_\Phi + \frac{\kappa}{4\tau}V_\Phi \right\}
\]
where $\alpha$ is an auxiliary function defined by $\cos \alpha(s) = x'(s)$, and 
\[
 C(s) = \frac{\cos x(s) \cos \alpha(s)}{\sqrt{\cos^2 \alpha(s) \bigl[\cos^2 x(s) + \frac{4\tau^2}{\kappa} \sin^2 x(s) \bigr] + \frac{4\tau^2}{\kappa} \sin^2 \alpha(s)}}.
\]

Now by a straigthforward computation we obtain the mean curvature $H$ of $\Sigma$ with respect to the normal $N$ defined above:
\[
 \frac{2\cos^3 \alpha \cos^3 x}{\tau C^3}H = \left(\cos^2 x + \frac{4\tau^2}{\kappa}\sin^2 x\right)\alpha' + \frac{\sin \alpha}{\tan x}\left[ \left(1 - \frac{4\tau^2}{\kappa} \right)\cos^2 x \cos^2 \alpha + \frac{4\tau^2}{\kappa} (1- \tan^2 x) \right] 
\]
Then we get the following result:

\begin{lemma}
 The generating curve $\gamma(s) = \bigl( \cos x(s) e^{iy(s)}, \sin x(s) \bigr)$ of a surface $\Sigma$ of $\s^3_b$ invariant by the group $\rot$ satisfies the following system of ordinary differential equations:
	\begin{equation}\label{eq:sistema}
	 \left\{
		\begin{aligned}
	  x' &= \cos \alpha, \\
		y' &= \dfrac{\sin \alpha}{\cos x}, \\
		\alpha' &= \frac{1}{(\cos^2 x + \frac{4\tau^2}{\kappa} \sin^2 x)}\left\{ \frac{2\cos^3 \alpha \cos^3 x}{\tau C^3}H + \right.\\
		&\left.\qquad -\frac{\sin \alpha}{\tan x}\left[ \left(1 - \frac{4\tau^2}{\kappa} \right)\cos^2 x \cos^2 \alpha + \frac{4\tau^2}{\kappa} (1- \tan^2 x) \right]\right\}
		\end{aligned} 
	 \right.
	\end{equation}
	where $H$ is the mean curvature of $\Sigma$ with respect to the normal defined before. Moreover, if $H$ is constant then the function:
	\begin{equation}\label{eq:integral-primera}
		\tau C \sin x \tan \alpha - H \sin^2 x
	\end{equation}
	is a constant $E$ that we will call the \emph{energy} of the solution.
\end{lemma}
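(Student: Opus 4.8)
The plan is to get the system \eqref{eq:sistema} first, essentially for free, and then to verify by a direct (but somewhat lengthy) computation that \eqref{eq:integral-primera} is a first integral in the constant mean curvature case.

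For \eqref{eq:sistema}: the first line $x'=\cos\alpha$ is merely the definition of the auxiliary function $\alpha$. For the second line I would use that, in the coordinates $(x,y)$, the base carries the round metric $\mathrm{d}x^2+\cos^2 x\,\mathrm{d}y^2$, so the unit-speed condition for $\gamma$ reads $(x')^2+\cos^2 x\,(y')^2=1$; combined with $x'=\cos\alpha$ this gives $\cos^2 x\,(y')^2=\sin^2\alpha$, and fixing the sign by the chosen orientation of $\gamma$ yields $y'=\sin\alpha/\cos x$. The third line is then obtained just by solving the expression for the mean curvature $H$ displayed immediately before the statement for $\alpha'$, which is legitimate because $\cos^2 x+\tfrac{4\tau^2}{\kappa}\sin^2 x>0$. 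This proves the first assertion for an arbitrary $\rot$-invariant surface, with $H=H(s)$ its (a priori non-constant) mean curvature function.

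For the first integral I would set $\mu:=4\tau^2/\kappa$ and begin by rewriting the formula for $C$ given before the statement in the compact form $C^2D=\cos^2 x\cos^2\alpha$ with $D:=\mu+(1-\mu)\cos^2 x\cos^2\alpha$; this immediately gives $\partial_x C=-\mu\sin x\cos\alpha\,D^{-3/2}$ and $\partial_\alpha C=-\mu\cos x\sin\alpha\,D^{-3/2}$, and also the algebraic identity $D-\mu\sin^2\alpha=\cos^2\alpha(\cos^2 x+\mu\sin^2 x)$, which gets used repeatedly. Then I would differentiate $E(s)=\tau C\sin x\tan\alpha-H\sin^2 x$ along a solution with $H$ constant, substitute $x'=\cos\alpha$, $y'=\sin\alpha/\cos x$, $(\sin^2 x)'=2\sin x\cos x\cos\alpha$ and the expression for $\alpha'$ from \eqref{eq:sistema}, and split $\mathrm{d}E/\mathrm{d}s$ into the part proportional to $H$ and the part free of $H$. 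In the $H$-part the only contributions come through $\alpha'$, i.e. through $(\tan\alpha)'$ and through $\partial_\alpha C\cdot\alpha'$, and using $C^2D=\cos^2 x\cos^2\alpha$ together with the identity above they combine to exactly $H(\sin^2 x)'$, cancelling the term $-H(\sin^2 x)'$. In the $H$-free part, after multiplying through by $D^{3/2}$ and eliminating $\alpha'$ with the same identity, the whole expression collapses to $\mu\cos\alpha\sin\alpha(\cos^2 x-\sin^2 x-\cos^2 x+\sin^2 x)=0$. Hence $\mathrm{d}E/\mathrm{d}s\equiv 0$.

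The main obstacle is precisely this last computation: it is routine but bulky, and I would keep it under control by clearing the common factor $D^{3/2}$ at the outset and by treating the $H$-linear and $H$-free parts separately, since each vanishes on its own. As motivation that such a conserved quantity must exist, note that when $\gamma$ is parametrized by $y$, the functional whose critical points are the $\rot$-invariant CMC surfaces (area minus $2H$ times enclosed volume, restricted to $\rot$-invariant competitors) has a Lagrangian $L$ that does not depend explicitly on $y$, because the translation $y\mapsto y+c$ is induced by an isometry of $\s^3_b(\kappa,\tau)$ commuting with $\rot$; the quantity $E$ is, up to a multiplicative constant, the associated Hamiltonian $x_y\,\partial L/\partial x_y-L$, so its conservation exactly when $H$ is constant is what Noether's theorem predicts, and the direct differentiation above is just the quickest way to pin down its explicit form.
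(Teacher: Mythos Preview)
Your proposal is correct and follows the same route the paper takes, only in more detail. The paper does not actually give a proof of this lemma: after displaying the mean curvature formula it simply writes ``Then we get the following result'' and states the lemma, treating both the system \eqref{eq:sistema} and the first integral \eqref{eq:integral-primera} as immediate. Your plan makes explicit exactly what the paper leaves implicit---the first two equations from the definition of $\alpha$ and the arc-length condition, the third by solving the displayed $H$-formula for $\alpha'$, and the conservation of $E$ by direct differentiation---and your algebraic identities (notably $C^2D=\cos^2 x\cos^2\alpha$ and $D-\mu\sin^2\alpha=\cos^2\alpha(\cos^2 x+\mu\sin^2 x)$) are correct and make the cancellation transparent. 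The Noether-theorem remark explaining \emph{why} such a conserved quantity should exist is a useful addition not present in the paper.
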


\begin{remark}\label{rmk:properties-system-sphere} From the uniqueness of the solutions of~\eqref{eq:sistema} for a given initial conditions one can show that if $(x, y, \alpha)$ is a solution then:
\begin{enumerate}[(i)]
 \item We can translate the solution by the $y$-axis, i.e., $(x, y + y_0, \alpha)$ is a solution for any $y_0 \in \R$.
 \item Reflection of a solution curve across a line $y = y_0$ is a solution curve with opposite sign of $H$, that is, $(x, 2y_0 - y, -\alpha)$ is a solution for $-H$.
 \item Reversal of parameter for a solution is a solution with opposite sign of $H$, that is, $\bigl(x(2s_0-s), y(2s_0-s), \alpha(2s_0-s) + \pi\bigr)$ is a solution for $-H$. 
 \item If $(x,y, \alpha)$ is defined for $s \in ]s_0 - \varepsilon, s_0 + \varepsilon[$ with $x'(s_0) = 0$ then the solution can be continued by reflection across $y = y(s_0)$.
\end{enumerate}
So thanks to the above properties we can always consider a solution $(x, y, \alpha)$ with positive mean curvature and initial condition $(x_0, 0, \alpha_0)$ at $s = 0$.
\end{remark}

\begin{lemma}\label{lm:restricciones-E-Berger}
Let $\bigl(x(s), y(s), \alpha(s) \bigr)$ be a solution of~\eqref{eq:sistema} with energy $E$. Then the energy $E$ satisfies
\begin{equation}\label{eq:restricciones-energia-berger}
-H - \frac{1}{2}\sqrt{4H^2 + \kappa} \leq 2E \leq -H + \frac{1}{2}\sqrt{4H^2 + \kappa}
\end{equation}
and $x(s) \in [x_1, x_2]$ where $x_j = \arcsin \sqrt{t_j}$, $j = 1, 2$,
\[
t_1 = \frac{\kappa - 8HE - \sqrt{\kappa^2 - 16\kappa E(H+E)}}{2(4H^2 + \kappa)}, \quad t_2 = \frac{\kappa - 8HE + \sqrt{\kappa^2 - 16\kappa E(H+E)}}{2(4H^2 + \kappa)}.
\]
Also $x'(s) = \cos \alpha(s) = 0$ if, and only if, $x(s)$ is exactly $x_1$ or $x_2$.
\end{lemma}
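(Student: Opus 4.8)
The plan is to derive both conclusions from the first integral~\eqref{eq:integral-primera} by squaring it and bounding the right‑hand side in terms of $\sin^2 x$ alone. Recall that $\cos x(s)>0$ and, the generating curve lying in the upper half‑sphere, $x(s)\in[0,\pi/2)$; set $t(s)=\sin^2 x(s)\in[0,1)$. Introduce $W:=\tau C\sin x\tan\alpha$. Although $\tan\alpha$ is singular where $\cos\alpha=0$, $W$ is not: using the explicit expression for $C$ one checks that the factor $\cos\alpha$ cancels, leaving $W=\tau\cos x\sin x\sin\alpha/\sqrt{D}$ with $D=\cos^2\alpha\bigl[\cos^2 x+\tfrac{4\tau^2}{\kappa}\sin^2 x\bigr]+\tfrac{4\tau^2}{\kappa}\sin^2\alpha=\tfrac{1}{\kappa}\bigl[4\tau^2+(\kappa-4\tau^2)\cos^2\alpha\cos^2 x\bigr]>0$. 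Thus~\eqref{eq:integral-primera} reads simply $W=E+Ht$, and this identity is regular everywhere along the curve.

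Squaring, I would then obtain
\[
 W^2=\frac{\kappa\,\tau^2\cos^2 x\,\sin^2 x\,\sin^2\alpha}{4\tau^2+(\kappa-4\tau^2)\cos^2\alpha\cos^2 x},
\]
and the crucial elementary estimate is $W^2\le\tfrac{\kappa}{4}\sin^2 x\cos^2 x$. Clearing the positive denominator and writing $\sin^2\alpha=1-\cos^2\alpha$, this inequality is equivalent to $0\le\cos^2\alpha\bigl(\kappa\cos^2 x+4\tau^2\sin^2 x\bigr)$, which is obvious; moreover, since $\cos x>0$ makes the parenthesis strictly positive, equality holds precisely when $\cos\alpha=0$, that is, when $x'(s)=0$. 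Substituting $W=E+Ht$ yields, for every $s$,
\[
 q\bigl(t(s)\bigr)\le 0,\qquad q(t):=(4H^2+\kappa)\,t^2-(\kappa-8HE)\,t+4E^2,
\]
with equality if and only if $x'(s)=0$.

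The rest is bookkeeping. Since $4H^2+\kappa>0$, the parabola $q$ can take a nonpositive value only if its discriminant is nonnegative, and that discriminant equals $\kappa^2-16\kappa E(H+E)=\kappa\bigl(\kappa-16E(H+E)\bigr)$; the condition $\kappa-16E(H+E)\ge0$ is, after solving the resulting quadratic in $E$, exactly the bound~\eqref{eq:restricciones-energia-berger}. Granting this, $q$ has the two real roots $t_1\le t_2$ appearing in the statement, $q(t(s))\le0$ becomes $t(s)\in[t_1,t_2]$, and evaluating $q(0)=4E^2\ge0$ and $q(1)=4(H+E)^2\ge0$ shows that $0$ and $1$ lie outside the open interval $(t_1,t_2)$; together with the fact that $t(s)$ attains some value in $[t_1,t_2]\cap[0,1)$ this forces $0\le t_1\le t_2\le1$, whence $x(s)=\arcsin\sqrt{t(s)}\in[\arcsin\sqrt{t_1},\arcsin\sqrt{t_2}]=[x_1,x_2]$. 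Finally, combining the equality case above with $q(t(s))=0\Leftrightarrow t(s)\in\{t_1,t_2\}\Leftrightarrow x(s)\in\{x_1,x_2\}$ gives the last assertion. The only genuine computation is the simplification of $W^2$ and the recognition of the clean form $0\le\cos^2\alpha(\kappa\cos^2 x+4\tau^2\sin^2 x)$ of the sharp inequality; there is no conceptual obstacle, the one point requiring care being the regularity of the first integral, and of its square, at the zeros of $\cos\alpha$.
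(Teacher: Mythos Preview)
Your proof is correct and follows essentially the same route as the paper: both derive from the first integral~\eqref{eq:integral-primera} the sharp inequality $(E+H\sin^2 x)^2\le\tfrac{\kappa}{4}\sin^2 x\cos^2 x$, with equality precisely when $\cos\alpha=0$, and then analyze the resulting quadratic in $t=\sin^2 x$. Your presentation is in fact slightly more careful than the paper's---you check the regularity of $W$ at the zeros of $\cos\alpha$ and verify explicitly via $q(0),q(1)\ge0$ that $t_1,t_2\in[0,1]$---but the underlying argument is identical.
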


\begin{proof}
First from~\eqref{eq:integral-primera} we obtain
\begin{equation}\label{eq:sin-alpha}
	 \sin \alpha = \frac{1}{\rho} (E + H \sin^2 x) \sqrt{1 + \frac{4\tau^2}{\kappa} \tan^2 x }, \quad
	 \cos \alpha = \frac{1}{\rho}\tau \sin x \sqrt{1 - \frac{4}{\kappa} \frac{(E + H \sin^2 x)^2}{\sin^2 x \cos^2 x}}
\end{equation}
where $\rho = \sqrt{\tau^2 \sin^2 x + \left(1 - \frac{4\tau^2}{\kappa} \right) (E + H \sin^2 x)^2}$. Then $\frac{4}{k}(E + H \sin^2 x)^2 -\sin^2 x \cos^2 x \leq 0$, that is, $p(\sin^2 x) \leq 0$, where $p$ is the polynomial
\[
 p(t) = \left(1 + \frac{4H^2}{\kappa}\right) t^2 - \left(1 - \frac{8H}{\kappa} E \right) t + \frac{4}{\kappa}E^2
\]
As $p(t)$ must be non-positive the vertex of this parabola must be non-positive too, that is, 
\begin{equation}\label{eq:inequality-for-E-H}
 \left(1 - \frac{8}{\kappa}E \right)^2 - \frac{16}{\kappa}E^2 \left(1 + \frac{4H^2}{\kappa} \right) \geq 0
\end{equation}
and $\sin^2 x(s) \in [t_1, t_2]$ where $t_1$ and $t_2$ are the roots of $p$. Finally, as $\cos x(s) \geq 0$ because we choose the curve $\gamma$ on the upper half semisphere, it must be $x(s) \in [0, \pi/2]$ so $x(s) \in [x_1, x_2]$ where $x_j = \arcsin \sqrt{t_j}$, $j = 1, 2$.
\end{proof}

Now we describe the complete solutions of~\eqref{eq:sistema} in terms of $H$ and $E$.

\begin{theorem}\label{tm:clasificacion-berger}
 Let $\Sigma$ be a complete, connected, rotationally invariant surface with constant mean curvature $H$ and energy $E$ in  $\s^3_b(\kappa, \tau)$. Then $\Sigma$ must be of one of the following types:
	\begin{enumerate}[(i)]
	\item If $E = 0$ then $\Sigma$ is a $2$-sphere (possibly immersed, see Corollary~\ref{cor:sphere-CMC-berger}). Moreover, if $H = 0$ too then $\Sigma$ is the great $2$-sphere $\{(z, w) \in \s^3:\, \pIm(z) = 0\}$ which is always embedded.
	 \item If $E = \frac{1}{4}(-2H \pm \sqrt{4H^2 + \kappa})$ then $\Sigma$ is the Clifford torus with radii $r = \sqrt{\frac{1}{2} \pm \frac{H}{\sqrt{4H^2 + \kappa}}}$, that is, $\mathcal{T}_H = \{(z, w) \in \s^3:\, \abs{z} = r^2, \, \abs{w}^2 = 1- r^2\}$.
	 \item If $E > 0$ or $E < -H$ (and different from the case (ii)) then $\Sigma$ is an unduloid-type surface (see figure~\ref{fig:unduloide}).
	 \item If $-H < E < 0$ then $\Sigma$ is a nodoid-type surface (see figure~\ref{fig:nodoide}).
	 \item If $E = -H$ then $\Sigma$ is generated by an union of curves meeting at the north pole (see figure~\ref{fig:circles}).
	\end{enumerate}
	Surfaces of type (iii)--(v) are compact if and only if
	\begin{equation}
	T(H,E) = 2\int_{x_1}^{x_2} \frac{(E+H \sin^2 x) \sqrt{1 + \frac{4\tau^2}{\kappa} \tan^2 x}}{\tau \sqrt{\sin^2 x \cos^2 x - \frac{4}{\kappa} (E+H\sin^2 x)^2}}\,\mathrm{d}x \label{eq:periodo-berger}
	\end{equation}
	is a rational multiple of $\pi$ (see Lemma~\ref{lm:restricciones-E-Berger} for the definition of $x_j$, $j = 1, 2$). Moreover, surfaces of type (iii) are compact and embedded if and only if $T = 2\pi/k$ with $k \in \mathbb{Z}$.
\end{theorem}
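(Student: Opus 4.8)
The plan is to reduce the full system~\eqref{eq:sistema} to the one--dimensional dynamics of the latitude $x(s)$, and then to read off the five regimes, together with the compactness and embeddedness statements, from the polynomial $p$ of Lemma~\ref{lm:restricciones-E-Berger}.

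First I would eliminate $\alpha$ and $y$. Squaring the two expressions in~\eqref{eq:sin-alpha} and using $x'=\cos\alpha$ yields an autonomous first--order equation $(x')^{2}=G(x)$ with, after clearing denominators,
\[
 G(x)=\frac{-\,\tau^{2}\,p(\sin^{2}x)}{\cos^{2}x\,\bigl(\tau^{2}\sin^{2}x+(1-\tfrac{4\tau^{2}}{\kappa})(E+H\sin^{2}x)^{2}\bigr)},
\]
so that, by Lemma~\ref{lm:restricciones-E-Berger}, $G\ge 0$ exactly when $\sin^{2}x\in[t_{1},t_{2}]$, and the turning points of the oscillation of $x$ are $x_{1}$ and $x_{2}$. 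Once $x(s)$ is known, $\alpha(s)$ is determined by~\eqref{eq:sin-alpha} up to the sign of $\cos\alpha$ (which switches only at the turning points), and $y(s)$ by the quadrature of $y'=\sin\alpha/\cos x$; hence, after the normalisations of Remark~\ref{rmk:properties-system-sphere}, a complete connected solution is determined up to ambient isometry by the pair $(H,E)$, and it is defined for all $s\in\R$ because the right--hand side of~\eqref{eq:sistema} is smooth away from the axis $x=0$ and the pole $x=\pi/2$, across which the solution extends by the reflection properties in Remark~\ref{rmk:properties-system-sphere}.

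Next I would split into cases according to the position of the roots $t_{1}\le t_{2}$ inside $[0,1]$, using the identities $p(0)=\tfrac{4}{\kappa}E^{2}$, $p(1)=\tfrac{4}{\kappa}(H+E)^{2}$ and $p(-E/H)=E(E+H)/H^{2}$, together with the discriminant condition of Lemma~\ref{lm:restricciones-E-Berger} (i.e.~\eqref{eq:restricciones-energia-berger}):
\begin{enumerate}[(a)]
 \item equality in~\eqref{eq:restricciones-energia-berger}, that is $E=\tfrac14(-2H\pm\sqrt{4H^{2}+\kappa})$, forces $t_{1}=t_{2}$, so $x$ and $\alpha$ are constant, $\gamma$ is a latitude $\{\abs{z}=\mathrm{const}\}$, and $\Sigma=\pi^{-1}(\gamma)$ is a Clifford torus; substituting back into~\eqref{eq:sistema} gives the radius claimed in (ii);
 \item $E=0$ gives $t_{1}=0$, so $\gamma$ meets the axis $\ell$; a local expansion shows $(x')^{2}\to 1$ there, so $\gamma$ crosses $\ell$ transversally and $\pi^{-1}(\gamma)$ closes up to a (possibly immersed) topological sphere, which one identifies with one of the constant mean curvature spheres of Section~2 (where $E=0$ amounts to the vanishing of the Abresch--Rosenberg differential; cf.~Proposition~\ref{prop:area-esferas} and Corollary~\ref{cor:sphere-CMC-berger}); if moreover $H=0$ then~\eqref{eq:integral-primera} forces $\tan\alpha\equiv 0$, hence $y$ is constant and $\gamma$ lies in the totally geodesic $\{\pIm z=0\}$, which is embedded --- this is (i);
 \item $E=-H$ with $E\ne 0$ gives $t_{2}=1$, so $\gamma$ reaches the pole $(0,1)$ in finite parameter time with $(x')^{2}$ tending to a positive constant, and continues through it; concatenating these arcs yields the configuration described in (v);
 \item in the remaining range $0<t_{1}<t_{2}<1$ the curve $x(s)$ oscillates strictly inside $(0,\pi/2)$, and from $p(-E/H)=E(E+H)/H^{2}$ the sign of $\sin\alpha$ --- equivalently of the longitude speed $y'$ --- changes on $[x_{1},x_{2}]$ precisely when $-H<E<0$, which gives the backtracking profile of the nodoid in (iv), and stays constant when $E>0$ or $E<-H$, which gives the monotone (graphical) profile of the unduloid in (iii).
\end{enumerate}

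Finally, for types (iii)--(v) the surface $\Sigma=\pi^{-1}(\gamma)$ is compact if and only if the profile curve $\gamma$ closes up in $\s^{3}_{b}$. Over one period of the oscillation of $x$ the longitude increases by the quantity obtained from $\mathrm{d}y/\mathrm{d}x=\sin\alpha/(\cos x\cos\alpha)$ and~\eqref{eq:sin-alpha}, which is exactly $T(H,E)$ of~\eqref{eq:periodo-berger}; hence $\gamma$ is closed if and only if $nT\in 2\pi\mathbb{Z}$ for some $n\in\mathbb{Z}$, i.e.\ $T$ is a rational multiple of $\pi$. For an unduloid (iii) the monotonicity of $y$ makes $\gamma$ a graph over $\ell$ along each period, so $\gamma$ closes up embeddedly exactly after a single accumulated turn, giving $T=2\pi/k$ with $k\in\mathbb{Z}$; if instead $T=2\pi m/k$ with $\gcd(m,k)=1$ and $m\ge 2$, the curve winds $m$ times and must self--intersect. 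I expect the real difficulty to lie in this global bookkeeping rather than in the (routine) reduction to $(x')^{2}=G(x)$ and the root count of $p$: namely, pinning down the degenerate cases --- that $E=0$ genuinely closes up to the sphere of Section~2, and that $E=-H$ produces the union of curves through the pole in (v) --- and, above all, turning the monotonicity of $y$ into the clean embeddedness criterion $T=2\pi/k$ by ruling out all self--intersections.
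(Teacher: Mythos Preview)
Your reduction to the autonomous equation $(x')^{2}=G(x)$ and the subsequent root count of $p$ is correct and is, in essence, the same framework the paper uses (Lemma~\ref{lm:restricciones-E-Berger} and equations~\eqref{eq:sin-alpha}). Where you diverge is in how the unduloid/nodoid distinction and the qualitative shape of $\gamma$ are established. The paper does \emph{not} use your clean shortcut $p(-E/H)=E(E+H)/H^{2}$ to decide whether $\sin\alpha$ changes sign on $[x_{1},x_{2}]$; instead it computes $\alpha'(s)$ explicitly and reduces its sign to that of a cubic $q(t)$ (equations~\eqref{eq:alpha'}--\eqref{eq:alpha'-polinomio}), and then studies the profile as a graph $x=x(y)$, computing both $\mathrm{d}x/\mathrm{d}y$ and $\mathrm{d}^{2}x/\mathrm{d}y^{2}$ (equations~\eqref{eq:derivative-x(y)}--\eqref{eq:second-derivative-x(y)}) to pin down monotonicity and convexity intervals separately in each case. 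Your argument is more economical and gets the classification with less computation; the paper's buys a finer description of the curve (the single inflection of the unduloid arc, the two convex/concave branches of the nodoid) that justifies the figures and the ``type'' labels beyond just the graph/non-graph dichotomy.

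Two places where the paper is genuinely more explicit than your sketch, and which you rightly flagged as the delicate points: for $E=0$, rather than a local expansion at the axis, the paper integrates $y'(x)$ in closed form (equation~\eqref{eq:def-y-sphere}), which simultaneously proves smooth closure at $\ell$ and gives the exact embeddedness threshold $y(0)>-\pi$; and for $E=-H$, the continuation through the north pole is done by exhibiting the explicit symmetry $(\hat x,\hat y,\hat\alpha)(s)=(x(2s_{1}-s),\,2y(s_{1})+\pi-y(2s_{1}-s),\,3\pi-\alpha(2s_{1}-s))$, which shows the next branch is the reflection across $y=y(s_{1})+\pi/2$. Your periodicity and embeddedness arguments for (iii)--(v) match the paper's.
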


\begin{remark}\label{rm:clasificacion-berger}~
\begin{enumerate}
	\item In the round sphere case this study was made by Hsiang~\cite[Theorem 3]{H}. However he did not distinguish, in terms of the energy, between the nodoid and unduloid case. The subriemannian case, which we can think as fixing $\kappa = 4$ and taking $\tau \rightarrow \infty$, was study by Hurtado and Rosales~\cite[Theorem 6.4]{HR} 
	
	\item As $T(H,E)$ is a non-constant continuous function over a non-empty subset of $\R^2$ (see~\eqref{eq:restricciones-energia-berger} for the restrictions of $E$), there exist values of $H$ and $E$ such that $T(H,E)$ is a rational multiple of $\pi$ and so the corresponding surfaces of type (iii)-(v) are compact. 
	
	Among all these compact examples, the minimal ones only appear in (iii) and, from~\eqref{eq:restricciones-energia-berger}, for $0 < E^2 \leq \kappa/16$. For $\kappa = 4$ and $\tau = 0.4$, figure~\ref{fig:periodo-unduloide-minimal} shows that there exists a value of $E$ such that $T(0,E) = 2\pi$, that is, the corresponding surface is embedded and compact so it is an embedded minimal torus which is not a Clifford torus. This surface is a counterexample to the Lawson's conjecture in the Berger sphere $\s^3_b(4,0.4)$. 
	
	The author thinks that there exists a value $\tau_0 \approx 0.57$ such that for $\tau \leq \tau_0$ there are always examples of compact embedded minimal tori (unduloid-type surface) whereas for $\tau > \tau_0$ there are not. These surfaces would be counterexamples to the Lawson's conjecture in the Berger spheres with $\kappa = 4$ and $\tau \leq \tau_0$.
	
	\begin{figure}[htbp]
		\centering
		\includegraphics{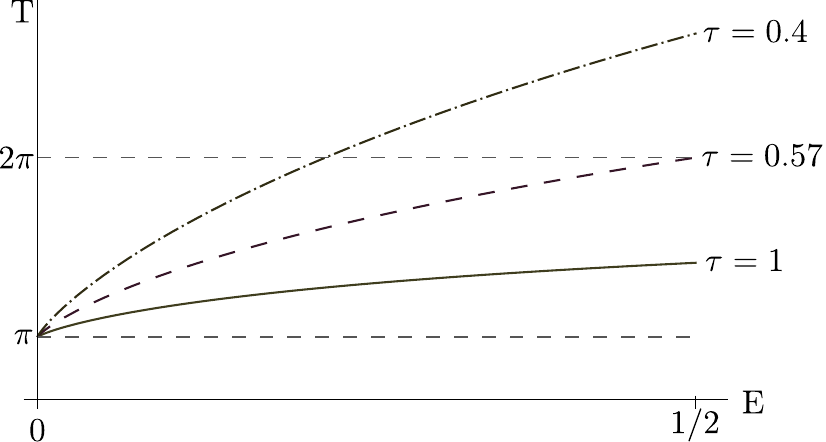}
		\caption{The period $T(0,E)$ (see~\eqref{eq:periodo-berger}) of a minimal unduloid-type surface in terms of the energy $E$ for three differents values of $\tau$ and fixing $\kappa = 4$. We have only depicted the period for $0 < E \leq 1/2$. \label{fig:periodo-unduloide-minimal}}
	\end{figure}
\end{enumerate}
\end{remark}

\begin{proof}
First we obtain several usefull formulae. Sustituting~\eqref{eq:sin-alpha} in the third equation of~\eqref{eq:sistema} we get
\begin{equation}\label{eq:alpha'}
 \alpha'(s) = \frac{\tau^2 \tan x(s) q(\sin^2 x(s))}{\cos x(s) \sqrt{\cos^2 x(s) + \frac{4\tau^2}{\kappa}\sin^2 x(s)}\left[\tau^2 \sin^2 x(s) + \left(1 - \frac{4\tau^2}{\kappa} \right)(E + H \sin^2 x(s))^2\right]^{3/2}}
\end{equation}
where $q(t)$ is the polynomial given by
\begin{equation}\label{eq:alpha'-polinomio}
	\begin{split}
 q(t) &= \frac{H}{\kappa^2}(\kappa - 4\tau^2)(4H^2 + \kappa)t^3 + \frac{1}{\kappa}(\kappa - 4\tau^2) \left( \frac{12EH^2}{\kappa} - (E+2H) \right)t^2 + \\
	&\quad +\left(\frac{12 H E^2\left(\kappa -4 \tau ^2\right)}{\kappa^2}+2 E+H \right) t + \frac{4 E^3 \left(\kappa -4 \tau ^2\right)}{\kappa ^2}-E
	\end{split}
\end{equation} 

\noindent \textbf{(i)} Firstly if $H = 0$ then by~\eqref{eq:sin-alpha} we get that $\sin \alpha = 0$, i.e., $x(s) = s + x_0$ and $y(s) = 0$. Hence the surface $\Sigma$ is the great $2$-sphere $\{(z, w) \in \s^3_b(\kappa, \tau):\, \pIm(z) = 0\}$. Secondly if $H > 0$ then, by Lemma~\ref{lm:restricciones-E-Berger}, $\sin^2 x(s) \in [0, \kappa/(4H^2 + \kappa)]$, i.e., $\tan^2 x(s) \in [0, \kappa/4H^2]$ and we may suppose that $\tan x(s) \in [0, \sqrt{\kappa}/2H]$. By~\eqref{eq:sin-alpha} $\cos \alpha > 0$ in that interval so we can express $y$ as a function of $x$. Taking into account~\eqref{eq:sistema} and~\eqref{eq:sin-alpha} an easy computation shows that
\[
 y'(x) = \frac{H}{\tau} \tan x \frac{\sqrt{1 + \dfrac{4\tau^2}{\kappa} \tan^2 x}}{\sqrt{1 - \dfrac{4H^2}{\kappa}\tan^2x}}, \quad x \in\, \left]0, \arctan \frac{\sqrt{\kappa}}{2H}\right[
\] 
We can integrate the above equation by the change of variable given by 
\[
u = \sqrt{1 - \frac{4H^2}{\kappa}\tan^2 x}\left/\sqrt{1 + \frac{4\tau^2}{\kappa}\tan^2 x}\right..
\]Finally we get
\begin{equation}\label{eq:def-y-sphere}
y(x) = 
\begin{cases}
-\arctan \left(\frac{\tau}{H} \lambda(x)\right) + \frac{H}{\tau} \dfrac{\sqrt{4\tau^2 - \kappa}}{\sqrt{4H^2+\kappa}}\arctan \left(\dfrac{\sqrt{4\tau^2 - \kappa}}{\sqrt{4H^2+\kappa}} \lambda(x)\right) & \text{if } \kappa - 4\tau^2 < 0, \\
\\
-\arctan \left(\frac{\tau}{H} \lambda(x)\right) - \frac{H}{\tau} \dfrac{\sqrt{\kappa - 4\tau^2}}{\sqrt{4H^2+\kappa}}\arctanh \left(\dfrac{\sqrt{\kappa - 4\tau^2}}{\sqrt{4H^2+\kappa}} \lambda(x)\right) & \text{if } \kappa - 4\tau^2 > 0, \\
\end{cases}
\end{equation}
where $\lambda(x) = \left.\sqrt{1 - \frac{4H^2}{\kappa}\tan^2 x}\right/\sqrt{1 + \frac{4\tau^2}{\kappa}\tan^2 x}$. We note that $y\bigl(\arctan(\sqrt{\kappa}/2H)\bigr)=0$ where meets orthogonally the axis $\ell$ and $y$ is a strictly increasing function of $x$, for $x$ in $]0, \arctan(\sqrt{\kappa}/2H)[$. Then $y$ reach its minimun at $x = 0$. The function $y$ only give us half of a sphere, but we can obtain the other half by reflecting the solution along the line $x = 0$. Then its easy to see that the sphere is embedded if, and only if, $y(0) > -\pi$. In other case the sphere is immersed (see figure~\ref{fig:immersed-cmc-sphere}).

\begin{figure}[htbp]
\centering
\includegraphics{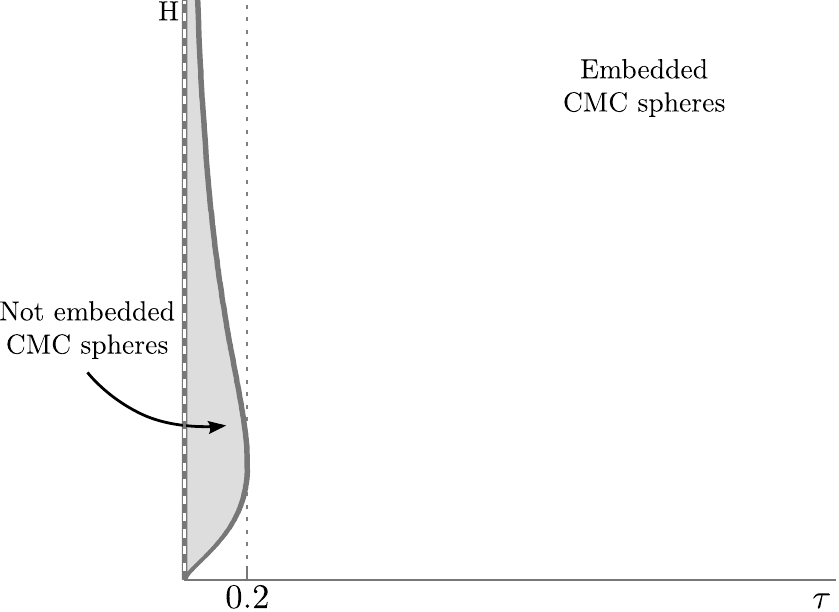}
\caption{Non-embedded region of CMC spheres (we fix $\kappa = 4$)}\label{fig:immersed-cmc-sphere}
\end{figure}

\noindent \textbf{(ii)} If $E = \frac{1}{4}(-2H \pm \sqrt{4H^2 + \kappa})$ the previous lemma says that $t_1 = t_2$ and so $x(s)$ must be the constant $x_1 = x_2 = \arcsin \sqrt{\frac{1}{2}(1 \mp \frac{2H}{\sqrt{4H^2 + \kappa}})}$. We can integrate completely the solution to obtain that $\Phi(s, t) = (r e^{is/r}, \sqrt{1-r^2} e^{it})$ where
\[
 r = \sqrt{\frac{1}{2} \pm \frac{H}{\sqrt{4H^2 + \kappa}}}
\]
i.e., $\Sigma$ is a Clifford torus.
\medskip

\noindent\textbf{[(iii), case $E>0$]}. We suppose now that the equality in~\eqref{eq:inequality-for-E-H} does not hold and that $E > 0$. We consider the maximal solution of~\eqref{eq:sistema} with initial condition $(x_1,0,\pi/2)$ (we will later see that this is not a restriction) and we may suppose, by the maximality condition, that there exist $s_2$ such that $\alpha(s_2) = \pi/2$.
\medskip

We analyze the sign of $\alpha'$ using~\eqref{eq:alpha'}. It is sufficient to study de sign of the polynomial $q$ in~\eqref{eq:alpha'-polinomio} between $t_1$ and $t_2$ (see Lemma~\ref{lm:restricciones-E-Berger}). A straightforward computation shows that $q$ is strictly increasing and that $q(t_1)q(t_2) \leq 0$. Then there exist a unique $s_1$ such that $\alpha'(s_1) = 0$. So $\alpha$ is a strictly increasing function in $]0, s_1[$, strictly decreasing in $]s_1, s_2[$ and $s_1$ is an absolute maximum.  Now, as $\sin \alpha > 0$ we can express $x$ as a function of $y$, then from~\eqref{eq:sistema}
\begin{equation}\label{eq:derivative-x(y)}
 \frac{\mathrm{d}\, x}{\mathrm{d}\, y} = \cos x \cot \alpha > 0
\end{equation}
so $x(y)$ is an strictly increasing function and, because $\cos \alpha(s_2) = 0$, it must be $x(y(s_2)) = x_2$. In particular the solution $x$ takes all values in the interval $[x_1,x_2]$ so, by the unicity of the solution, every maximal solution with initical condicion $(x_0, 0, \alpha_0)$ with $x_0$ necesarilly in $[x_1,x_2]$ must be a reparametrization of this one. Finally, taking into account the above formula, the third equation of~\eqref{eq:sistema} and~\eqref{eq:sin-alpha}, we get
\begin{equation}\label{eq:second-derivative-x(y)}
\begin{split}
\frac{\mathrm{d}^2\, x}{\mathrm{d}\, y^2} = &\frac{-\tau^2 \sin x \cos x}{\left(\cos^2 x + \frac{4\tau^2}{\kappa} \sin^2 x \right)^2 (E + H \sin^2 x)^3}
\left[\cos^2 x\, q(\sin^2 x) +\right. \\
&\quad +\left.\left(\cos^2 x + \frac{4\tau^2}{\kappa}\sin^2 x \right) (E + H \sin^2 x)\left(\cos^2 x \sin^2 x - \frac{4}{\kappa}(E + H \sin^2 x)^2 \right) \right]
\end{split}
\end{equation}
It is straightforwad to check that $\mathrm{d}^2x/\mathrm{d}y^2$ has only one zero at $y_1$ in $]0, y(s_2)[$ and that $x$ is convex in $]0, y_1[$ and concave in $]y_1,y_2[$. By successive reflextions across the vertical lines on which $x(y)$ reaches its critial points, we get the full solution which is similar to an Euclidean unduloid (se figure~\ref{fig:unduloide}).

The period of this unduloid is given by
\begin{equation}\label{eq:periodo}
T = 2y(s_2) = 2\int_{x_1}^{x_2} y'(x) \,\mathrm{d}x 
= 2\int_{x_1}^{x_2} \frac{(E+H \sin^2 x) \sqrt{1 + \frac{4\tau^2}{\kappa} \tan^2 x}}{\tau \sqrt{\sin^2 x \cos^2 x - \frac{4}{\kappa} (E+H\sin^2 x)^2}}
\end{equation}
Hence if~\eqref{eq:periodo} is a rational multiple of $\pi$ then the surface is compact. Moreover, the surface is embedded if and only if $T = 2\pi/k$ for $k \in \N$.
\medskip

\begin{figure}[htbp]
\begin{minipage}[c]{.45\textwidth}
	\includegraphics[width=\textwidth]{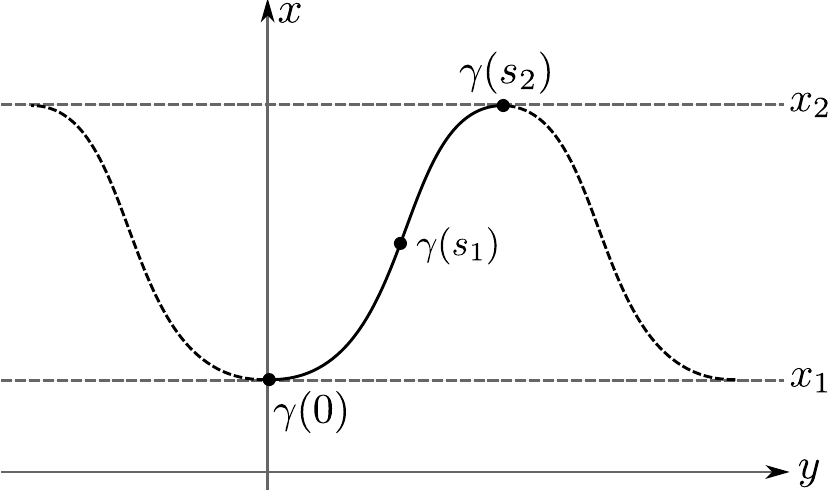}
	\caption{Curve $\gamma(s)$ for $E > 0$\label{fig:unduloide}}
\end{minipage}
\hspace{0.05\textwidth}
\begin{minipage}[c]{0.45\textwidth}
	\includegraphics[width=\textwidth]{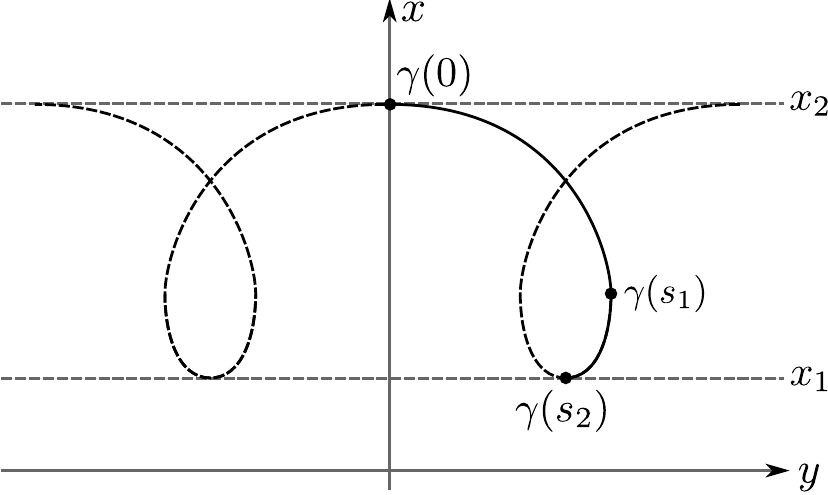}
	\caption{Curve $\gamma(s)$ for $E < 0$ and $E \neq -H$\label{fig:nodoide}}
\end{minipage}
\end{figure}

\noindent\textbf{[(iii), case $E<-H$]}. In this case $\sin \alpha < 0$ so we can express $x$ as a function of $y$ and a similar reasoning as in the previous case is sufficient to check that the surface must be a unduloid (see figure~\ref{fig:unduloide}).
\medskip

\noindent\textbf{(iv)} If $-H < E < 0$ we consider the maximal solution with initial condition $(x_2,0,\pi/2)$. We note that in this case $\sin \alpha$ may change its sign: $\sin \alpha < 0$ if $\sin^2 x \in [t_1, -E/H[$ and $\sin \alpha > 0$ if $\sin^2 x \in ]-E/H, t_2]$. By~\eqref{eq:alpha'} $\alpha' > 0$ so $\alpha$ is strictly increasing. Let $0 < s_1 < s_2$ such that $\alpha(s_1) = \pi$ and $\alpha(s_2) = 3\pi/2$ (and so $x(s_2) = x_1$). Then $\alpha \in ]\pi/2,\pi[$ on $s \in ]0, s_1[$ and $\alpha \in ]\pi, 3\pi/2[$ on $s \in ]s_1,s_2[$. Now we can express the solution $\gamma$ in $]0,s_2[$ as two graphs of the function $x(y)$ meeting at the line $y = y(s_1)$. First using~\eqref{eq:derivative-x(y)} we get that $x(y)$ is strictly decreasing on $]0,y(s_1)[$ and strictly increasing on $]y(s_2), y(s_1)[$. Second taking into account~\eqref{eq:second-derivative-x(y)} $x(y)$ is strictly concave on $]0, y(s_1)[$ and strictly convex on $]y(s_2), y(s_1)[$. As $y = 0$ and $y = y(s_2)$ are lines of symmetry because $x'(0) = x'(s_2) = 0$, we can reflect successively $\gamma$ to obtain the complete solution, which is similar to an Euclidean nodoid (see figure~\ref{fig:nodoide}). Also the solution produce a compact surface if~\eqref{eq:periodo} is a rational multiple of $\pi$ as in the nodoid case. In this case the surface is always immersed.
\medskip

\noindent\textbf{(v)} Finally we study the case $E = -H \neq 0$. Now $\sin x \in [2H/\sqrt{4H^2 + \kappa}, 1]$ so the curve may aproach to the north pole $p_N$ of the $2$-sphere. We consider the maximal solution with initial condition $(\arcsin(2H/\sqrt{4H^2 + \kappa}), 0, 3\pi/2)$ and define $s_1 > 0$ the first number such that $\alpha(s_1) = 2\pi$, that is, the first time the curve $\gamma$ meets the north pole. We can express $x$ as a function of $y$ on every connected component of $\gamma\setminus \{p_N\}$ because $\sin \alpha < 0$ away of $p_N$. Using~\eqref{eq:alpha'} we get that $\alpha' > 0$ and so $\alpha \in [3\pi/2, 2\pi]$. Then, taking into account~\eqref{eq:derivative-x(y)} and~\eqref{eq:second-derivative-x(y)}, we obtain that $x(y)$ is strictly decreasing and convex in $]y(s_1), 0[$.

We continue the generating curve to obtain another branch of the graph of the function $x(y)$ meeting the north pole. We observe now that
\[
\hat{x}(s) = x(2s_1 - s), \quad \hat{y}(s) = 2y(s_1) + \pi - y(2s_1-s), \quad \hat{\alpha}(s) = 3\pi - \alpha(2s_1 - s), \quad s \in [s_1, 2s_1]
\]
is a solution of~\eqref{eq:sistema} with energy $E = -H$. That is, the other branch of the solution is just the reflexion of $x(y)$ with respect the line $y = y(s_1) + \pi/2$. By successive reflexions across the critical points of $x$, we obtain the full solution (see figure~\ref{fig:circles}).

\begin{figure}[htbp]
\centering
\includegraphics{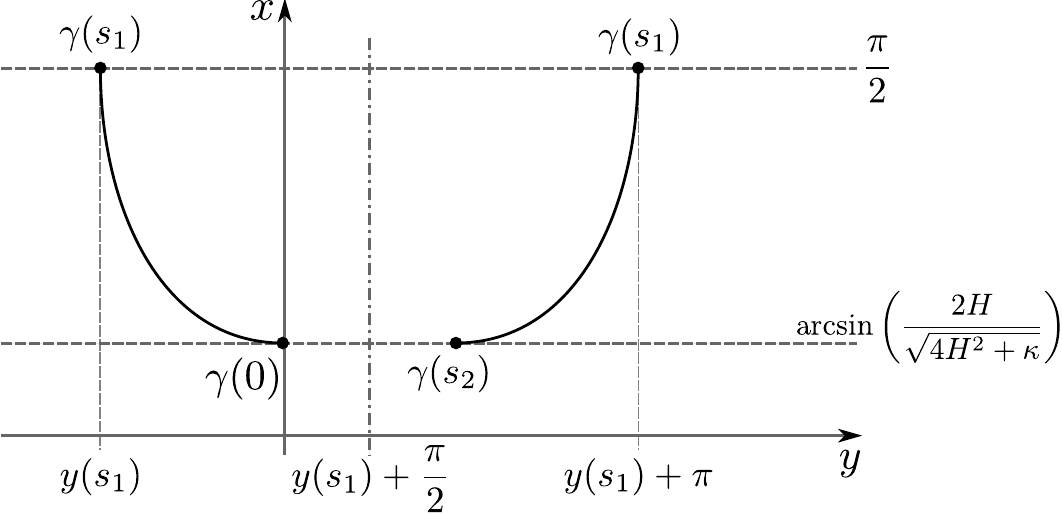}
\caption{Graphic of the curve $\gamma(s) = (x(s), y(s))$ for $E = -H$\label{fig:circles}}
\end{figure}

If $y(s_1) = -\pi/2$ then the reflexion line corresponding to the critical point coincide with the reflexion line of the branch an so the solution will be embedded. Using the next expresion for $y(s_1)$ we have check numerically that for $\tau$ small and for suitable $H$ we get $y(s_1) = -\pi/2$ and so the solution is an embedded torus.
\[
y(s_1) = \int_{\frac{2H}{\sqrt{4H^2 + \kappa}}}^{\pi/2} y'(x) \,\mathrm{d}x 
= \int_{\frac{2H}{\sqrt{4H^2 + \kappa}}}^{\pi/2} \frac{(E+H \sin^2 x) \sqrt{1 + \frac{4\tau^2}{\kappa} \tan^2 x}}{\tau \sqrt{\sin^2 x \cos^2 x - \frac{4}{\kappa} (E+H\sin^2 x)^2}}
\]
Moreover, $\gamma$ is close (and so $\Sigma$ is compact) if, and only if, $y(s_1)$ is a rational multiple of $2\pi$.
\end{proof}

\begin{corollary}\label{cor:sphere-CMC-berger}
Let $\Phi:]-a,a[ \times ]-\pi, \pi[ \rightarrow \s^3_b(\kappa, \tau)$ be the immersion given by:
\[
 \Phi(x, t) = 
	\begin{cases}
	 \left(\cos(x + a) e^{i y(x+a)}, \sin(x + a)e^{i t} \right), & \text{if } x < 0 \\
	 \left(\cos(a - x) e^{-i y(a - x)}, \sin(a - x)e^{i t} \right), & \text{if } x\geq 0 \\
	\end{cases}
\]
where $a = \arctan(\sqrt{\kappa}/2H)$ and $y$ is the function defined in~\eqref{eq:def-y-sphere}. Then $\Phi$ defines an immersion of a sphere with constant mean curvature $H$. Moreover $\Phi$ is an embbedding if and only if $y(0) > -\pi$ (see figure~\ref{fig:immersed-cmc-sphere}).
\end{corollary}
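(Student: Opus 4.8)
The plan is to deduce the corollary directly from part~(i) of Theorem~\ref{tm:clasificacion-berger} and its proof: the statement is just that case ($E=0$, $H>0$; recall that~\eqref{eq:def-y-sphere} presupposes $H>0$, the minimal great sphere being treated separately) rewritten as an explicit global parametrisation, so most of the work is already done. First I would identify $\Phi$ with $\pi^{-1}(\gamma)$ for the generating curve $\gamma$ constructed there. In that proof one arc of $\gamma$ is the graph $X\mapsto\bigl(\cos X\,e^{iy(X)},\sin X\bigr)$ over $X\in\,]0,a[$, with $a=\arctan(\sqrt{\kappa}/2H)$ and $y$ as in~\eqref{eq:def-y-sphere}, and the complementary arc is its reflection across the surface $\{\pIm z=0\}$, namely $X\mapsto\bigl(\cos X\,e^{-iy(X)},\sin X\bigr)$. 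Substituting $X=x+a$ on $\{x<0\}$ and $X=a-x$ on $\{x\ge0\}$ turns these two arcs into the two cases of the displayed $\Phi$; the pieces agree at $x=0$ because $y(a)=0$, while as $x\to\pm a$ we have $X\to0$, so $\sin X\to0$ and the circle $t\mapsto(\,\cdot\,)e^{it}$ collapses onto the two points $(e^{\pm iy(0)},0)$ of the circle $\ell$ fixed by $\rot$. Hence the domain, with the ends $x=\pm a$ collapsed and $t$ read modulo $2\pi$, is a topological $2$-sphere and $\Phi=\pi^{-1}(\gamma)$.

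Next I would settle smoothness and the value of the mean curvature. For $x\ne0$, $\Phi$ is (a reparametrisation of) the generating-curve immersion of Section~3 along the solution of~\eqref{eq:sistema} with $E=0$, which has constant mean curvature $H$ by construction and by Proposition~\ref{prop:integrability-conditions}; the two collapsed ends are the usual pole singularities of a rotationally invariant sphere. The point that needs an argument is $x=0$, where the two arcs of $\gamma$ meet and $X=a$: there $\cos\alpha=x'(s)=0$ by Lemma~\ref{lm:restricciones-E-Berger} (as $a=x_2$), so $\gamma$ has a turning point, and by Remark~\ref{rmk:properties-system-sphere}(iv) it continues smoothly across it as a solution of~\eqref{eq:sistema} by reflection across $y=y(s_0)=0$, which is precisely the passage from the first arc to the second. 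So $\gamma$, and hence $\pi^{-1}(\gamma)$, is smooth there as well; since the coordinate $x$ is itself critical at that point one should, strictly, reparametrise near $x=0$, but the underlying surface is a smooth immersed sphere of constant mean curvature $H$.

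Finally, the embeddedness dichotomy, which is stated in the proof of Theorem~\ref{tm:clasificacion-berger}(i) and which I would make precise as follows. Since $\pi$ restricts to a trivial circle bundle over $\gamma$ minus its two endpoints, with disjoint fibres, $\Phi$ is an embedding exactly when $\gamma$ is a simple arc with distinct endpoints $(e^{iy(0)},0)$ and $(e^{-iy(0)},0)$. A point of the base is determined by the pair $(X,\psi)$ with $X=\arccos\abs{z}\in[0,a]$ and $\psi=\arg z$ defined modulo $2\pi$ (using $\cos X>0$); along $\gamma$ one has $\psi=y(X)$ on the first arc and $\psi=-y(X)$ on the second, and since $y$ is strictly increasing on $]0,a[$ with $y(a)=0$ and minimum $y(0)<0$ (Theorem~\ref{tm:clasificacion-berger}(i)), $\psi$ runs strictly monotonically from $y(0)$ to $-y(0)$, with total variation $-2y(0)$. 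Hence $\gamma$ is simple with distinct endpoints precisely when $-2y(0)<2\pi$, i.e.\ $y(0)>-\pi$; at $-2y(0)=2\pi$ the two poles both equal $(-1,0)$ so $\Phi$ is not injective, and for $-2y(0)>2\pi$ an interior self-intersection appears. I expect the real obstacle to be precisely the behaviour at the turning point $x=0$ --- certifying that the explicit formula, whose coordinate degenerates there, genuinely defines a smooth immersed sphere --- which is resolved by Remark~\ref{rmk:properties-system-sphere}(iv); the rest is bookkeeping together with the elementary monotonicity argument for $\psi$.
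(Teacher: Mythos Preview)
Your proposal is correct and follows the paper's own approach: the corollary is stated without a separate proof because it is a direct repackaging of the analysis in the proof of Theorem~\ref{tm:clasificacion-berger}(i), and you spell out exactly that---the reparametrisation of the two arcs of $\gamma$, the smooth gluing at the turning point via Remark~\ref{rmk:properties-system-sphere}(iv), and the monotonicity argument for $\arg z$ giving the embeddedness criterion $y(0)>-\pi$. Your write-up is in fact more detailed than the paper's, which simply asserts the reflection and the embeddedness condition.
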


\section{The special linear group $\Sl(2,\R)$}

We are going to study the constant mean curvature surfaces invariante by a $1$-parameter group of isometries in $\Sl(2,\R)$, that is, in the group of real matrix of order $2$ with determinant $1$. It is more convenient to give another description of this group as $\Sl(2,\R) = \{(z, w) \in \C:\, \abs{z}^2 - \abs{w}^2 = 1\}$. It is easy to check that the transformation
\[
\begin{pmatrix}
a	&	b	\\
c	&	d
\end{pmatrix}
\mapsto \frac{1}{2}\Bigl( (a + d) + i (b - c), (b + c) + i(a -d)\Bigr), \quad ad-bc=1
\]
is a diffeomorphism. 
\medskip

We endow $\Sl(2,\R)$ with the metric $g$ given by
\[
g(E^i, E^j) = \delta_{ij}\frac{4}{-\kappa}, \quad g(V,V) = \frac{16\tau^2}{\kappa^2}, \quad g(V,E^j) = 0, \quad i,j = 1, 2.
\]
where $\kappa$ and $\tau$ are real numbers such that $\kappa < 0$ and $\tau \neq 0$ and $\{E^1, E^2, V\}$ is a global reference on $T\Sl(2,\R)$ defined by
\[
E^1_{(z, w)} = (\bar{w}, \bar{z}), \quad E^2_{(z, w)} = (i\bar{w}, i\bar{z}), \quad V_{(z, w)} = (iz, iw)
\]
Then $(\Sl(2,\R), g)$ is a model for an homogeneous space $\E(\kappa, \tau)$ with $\kappa < 0$. $\Sl(2,\R)$ is a fibration over $\mathbb{H}^2(\kappa)$ with fibers generated by the unit killing field $\xi = -\frac{\kappa}{4\tau}V$. We can identify the isometry group of $\Sl(2,\R)$ with $U_1(2)$.
\medskip

The connection associate to $g$ is given by
\begin{align*}
  \nabla_{E_1} E_1 &= 0, &\nabla_{E_1} E_2 &= V, &\nabla_{E_1} V &= \frac{4\tau^2}{\kappa}E_2, \\
  \nabla_{E_2} E_1 &= -V, &\nabla_{E_2} E_2 &= 0, &\nabla_{E_2} V &= - \frac{4\tau^2}{\kappa} E_1, \\
  \nabla_{V} E_1 &= \left(\frac{4\tau^2}{\kappa} - 2\right)E_2, &\nabla_{V} E_2 &= -\left(\frac{4\tau^2}{\kappa} - 2\right)E_1, &\nabla_{V} V &= 0.
\end{align*}

As in the Berger sphere case we concentrate our attention in the $1$-parameter groups of isometries witch fixed a curve, that we call the axis. We define 
\[
\rot = \left\{
		\begin{pmatrix}
			1	&	0	\\
			0	&	e^{it}
		\end{pmatrix}:\, t \in \R
		\right\}
\]
Then $\rot$ fix the curve $\ell = \{(z, 0)\in \Sl(2,\R)\}$ wich is a circle and we can idenfity $\Sl(2,\R)/\rot$ with $O =\{(z, a) \in \C \times \R:\, \abs{z}^2 -a^2 = 1\}$. 
\medskip

Let $\Phi: \Sigma \rightarrow \Sl(2,\R)$ be an immersion of an oriented constant mean curvature surface $\Sigma$ invariant by $\rot$. Then $\Sigma = \pi^{-1}(\gamma)$ for some smooth curve $\gamma \subset O$, where $\pi: \Sl(2,\R) \rightarrow O$ is the projection.

Let $\gamma(s) = (\cosh x(s) e^{iy(s)}, \sinh x(s))$, we may suppose that 
\[
x'(s)^2 + y'(s)^2 \cosh^2 x(s) = 1
\]
and we will call $\alpha$ the function such that $x'(s) = \cos \alpha(s)$. 

Then we can write down the immersion $\Phi(s, t) = (\cosh x(s) e^{iy(s)}, \sinh x(s)e^{it})$. A unit normal vector along $\Phi$ is given by
\[
N = C \left\{ -\tau \pRe\left[\left(\frac{\tan \alpha}{\cosh x} - i\tanh x \right)e^{i(t+y)}\right]E^1_\Phi - \tau \pIm\left[\left(\frac{\tan \alpha}{\cosh x} - i\tanh x \right)e^{i(t+y)}\right] - \frac{\kappa}{4\tau}V_\Phi\right\}
\]
where
\[
C(s) = \frac{\cos \alpha(s) \cosh x(s)}{\sqrt{\cos^2 \alpha(s) [\cosh^2 x(s) - \frac{4\tau^2}{\kappa} \sinh^2 x(s)] - \frac{4\tau^2}{\kappa}\sin^2 \alpha(s)}}
\]

Now by a straightforward computation we get the mean curvature $H$ of $\Sigma$ with respect to the normal defined above:
\[
\begin{split}
\frac{2\cos^3 \alpha \cosh^3 x}{\tau C^3}H &= \left( \cosh^2 x - \frac{4\tau^2}{\kappa}\sinh^2 x\right) \alpha' + \\
&\quad + \frac{\sin \alpha}{\tanh x} \left[ \left(1 - \frac{4\tau^2}{\kappa} \right) \cos^2 \alpha \cosh^2 x + \frac{4\tau^2}{\kappa}(2\cos^2 \alpha - 1)(1 + \tanh^2 x)\right]
\end{split}
\]

Hence we obtain the following result:
\begin{lemma}\label{lm:sistema-Sl(2,R)}
The generating curve $\gamma(s) = (\cosh x(s) e^{iy(s)}, \sinh x(s))$ of a surface $\Sigma$ invariant by the group $\rot$ satisfies the following system of ordinary differential equations:
	\begin{equation}\label{eq:sistema-Sl(2,R)}
	 \left\{
		\begin{aligned}
	  x' &= \cos \alpha, \\
		y' &= \dfrac{\sin \alpha}{\cosh x}, \\
		\alpha' &= \frac{1}{(\cosh^2 x - \frac{4\tau^2}{\kappa} \sinh^2 x)}\left\{ \frac{2\cos^3 \alpha \cosh^3 x}{\tau C^3}H + \right.\\
		&\left.\qquad -\frac{\sin \alpha}{\tanh x}\left[ \left(1 - \frac{4\tau^2}{\kappa} \right) \cos^2 \alpha \cosh^2 x + \frac{4\tau^2}{\kappa}(2\cos^2 \alpha - 1)(1 + \tanh^2 x)\right]\right\}
		\end{aligned} 
	 \right.
	\end{equation}
where $H$ is the mean curvature of $\Sigma$ with respect to the normal defined before. Moreover, if $H$ is constant then the function
\begin{equation}\label{eq:energy-Sl(2,R)}
\tau C \sinh x \tan \alpha - H \sinh^2 x
\end{equation}
is a constant $E$ that we will call the \emph{energy} of the solution.
\end{lemma}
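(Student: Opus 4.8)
The plan is as follows. The first two equations of~\eqref{eq:sistema-Sl(2,R)} are immediate from the setup: $x'=\cos\alpha$ is the definition of $\alpha$, and then the arc-length normalization $x'^2 + y'^2\cosh^2 x = 1$ gives $y'^2\cosh^2 x = \sin^2\alpha$, hence $y' = \sin\alpha/\cosh x$ once the sign of $\alpha$ is fixed. The third equation is obtained simply by solving the mean-curvature identity displayed immediately before the lemma for $\alpha'$, which is legitimate since its coefficient $\cosh^2 x - \frac{4\tau^2}{\kappa}\sinh^2 x$ is strictly positive (recall $\kappa<0$). So the real content of the first assertion is that mean-curvature identity, and I would establish it by computing $\Phi_s,\Phi_t,\Phi_{ss},\Phi_{st},\Phi_{tt}$ for $\Phi(s,t)=(\cosh x(s)e^{iy(s)},\sinh x(s)e^{it})$, expressing them in the global frame $\{E^1,E^2,V\}$, using the connection table for $\nabla$ on $\Sl(2,\R)$ recorded above to compute $\bar\nabla_{\Phi_s}\Phi_s$ and the remaining second derivatives, and pairing the results with the stated unit normal $N$ to read off the diagonal entries of the shape operator; half their trace is $H$. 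This is long but completely mechanical, and it also re-derives the formula for $C$ along the way.

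For the energy, put $F(s)=\tau C(s)\sinh x(s)\tan\alpha(s) - H\sinh^2 x(s)$ and aim to show $F'\equiv 0$ along every solution of~\eqref{eq:sistema-Sl(2,R)}. The only delicate point is differentiating $C$: rather than differentiating the square root directly, I would use the algebraic identity $C^2\bigl(\cos^2\alpha[\cosh^2 x - \frac{4\tau^2}{\kappa}\sinh^2 x] - \frac{4\tau^2}{\kappa}\sin^2\alpha\bigr)=\cos^2\alpha\cosh^2 x$, differentiate it implicitly, and then substitute $x'=\cos\alpha$, $y'=\sin\alpha/\cosh x$ together with the expression for $\alpha'$ from~\eqref{eq:sistema-Sl(2,R)}; the terms carrying $H$ and $\tau$ should cancel, leaving $F'=0$. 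More conceptually — which I would at least remark — $F$ is the flux-type first integral attached to the rotational Killing field $\rot$: for a surface invariant under a one-parameter group of isometries and of constant mean curvature, integrating the CMC equation along the generating curve produces exactly such a conserved quantity.

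The main obstacle is purely computational bookkeeping: keeping the $\frac{4\tau^2}{\kappa}$ factors and the signs coming from $\kappa<0$ straight, both in the mean-curvature computation behind the $\alpha'$ equation and in the cancellation verifying $F'=0$. As a sanity check I would specialize to the Berger-sphere situation (replace $\cosh,\sinh$ by $\cos,\sin$ and change the sign of $\kappa$) and confirm that the system and the first integral reduce to~\eqref{eq:sistema} and~\eqref{eq:integral-primera} respectively.
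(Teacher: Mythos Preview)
Your proposal is correct and follows essentially the same route as the paper: the text preceding the lemma already records the unit normal $N$, the function $C$, and the mean-curvature identity obtained ``by a straightforward computation'', and the lemma is then stated as an immediate consequence (solve that identity for $\alpha'$; the first two equations are the arc-length setup), with the constancy of the energy left as a direct check. Your plan to verify $F'\equiv 0$ by implicitly differentiating the defining relation for $C$ and substituting the system is exactly the intended calculation, and your flux/Killing-field remark is a helpful conceptual gloss the paper does not make explicit.
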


The remark~\ref{rmk:properties-system-sphere} is also true for this system and so we can always consider a solution $(x, y, \alpha)$ with positive mean curvature vector and initial condition $(x_0, 0, \alpha_0)$. 

\begin{lemma}\label{lm:restricciones-E-Sl(2,R)}
Let $(x(s), y(s), \alpha(s))$ be a solution of~\eqref{eq:sistema-Sl(2,R)} with energy $E$. Then:
\begin{enumerate}[(i)]
	\item If $4H^2 + \kappa > 0$ then it must be $4E < 2H - \sqrt{4H^2 + \kappa}$.
	Also $\sinh^2 x(s) \in [t_1, t_2]$ where
	\[
		t_1 = \frac{-8HE - \kappa - \sqrt{16\kappa E(H-E) + \kappa^2}}{2(4H^2 + \kappa)}, \quad 
		t_2 = \frac{-8HE - \kappa + \sqrt{16\kappa E(H-E) + \kappa^2}}{2(4H^2 + \kappa)}
	\]
	Moreover, $x'(s) = \cos \alpha(s) = 0$ if and only if $\sinh^2 x(s)$ is exactly $t_1$ or $t_2$.
	\item If $4H^2 + \kappa < 0$ then $\sinh^2 x(s) \in [t_1, +\infty[$.
	Moreover, $x'(s) = \cos \alpha(s) = 0$ if and only if $\sinh^2 x(s) = t_1$.	
	\item If $4H^2 + \kappa = 0$ then $E < H/2$ and $\sinh^2 x(s) \in [E^2/H(H-2E), +\infty[$. 	Moreover, $x'(s) = \cos \alpha(s) = 0$ if and only if $\sinh^2 x(s) = E^2/H(H-2E)$.
\end{enumerate}
\end{lemma}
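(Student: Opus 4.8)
The plan is to mirror, with the sign changes forced by $\kappa<0$, the argument used for Lemma~\ref{lm:restricciones-E-Berger}. By the symmetries recorded after Lemma~\ref{lm:sistema-Sl(2,R)} we may assume $H\ge 0$ (indeed $H>0$ whenever $4H^2+\kappa\ge 0$, since $\kappa<0$), and we write $t=\sinh^2 x(s)\in[0,+\infty)$. First I would eliminate $C$ and $\alpha$: solving the energy relation~\eqref{eq:energy-Sl(2,R)} for $\tan\alpha$, squaring, and inserting the explicit formula for $C(s)$ given just before Lemma~\ref{lm:sistema-Sl(2,R)}, one clears $C$ and is left with $\sin^2\alpha$ as an explicit rational function of $t$. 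Exactly as in~\eqref{eq:sin-alpha} this yields
\[
\sin\alpha=\frac{(E+H\sinh^2 x)\sqrt{\cosh^2 x-\tfrac{4\tau^2}{\kappa}\sinh^2 x}}{\rho},\qquad
\cos\alpha=\frac{\tau\sinh x\cosh x}{\rho}\sqrt{1+\frac{4}{\kappa}\,\frac{(E+H\sinh^2 x)^2}{\sinh^2 x\cosh^2 x}}
\]
for a suitable normalizing factor $\rho=\rho(x)>0$. Since $\kappa<0$ the radicand inside $\sin\alpha$ is automatically positive, so the binding constraint is that the radicand inside $\cos\alpha$ be non-negative, i.e.
\[
\frac{4}{\kappa}\,(E+H\sinh^2 x)^2+\sinh^2 x\cosh^2 x\ \ge\ 0 .
\]
Multiplying by $\kappa<0$ and expanding, this is precisely $P(t)\le 0$, where
\[
P(t)=(4H^2+\kappa)\,t^2+(8HE+\kappa)\,t+4E^2 .
\]
Note that $P(0)=4E^2\ge 0$, that $P$ has discriminant $\Delta=\kappa^2+16\kappa E(H-E)$ and roots $\dfrac{-(8HE+\kappa)\mp\sqrt{\Delta}}{2(4H^2+\kappa)}$, and that the product of its roots is $\dfrac{4E^2}{4H^2+\kappa}$.

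The statement now follows by discussing the sign of the leading coefficient $4H^2+\kappa$. If $4H^2+\kappa>0$, $P$ is convex, so $\{P\le0\}$ is the closed segment between its two (necessarily real, since the set is non-empty) roots; as $P(0)\ge0$ and the product of the roots is positive, both roots lie on the same side of $0$, and for $\{P\le0\}$ to meet $[0,+\infty)$ they must be non-negative, equivalently $8HE+\kappa\le0$, together with $\Delta\ge0$. Combining $\Delta\ge0$ with $8HE+\kappa\le0$ singles out $4E\le 2H-\sqrt{4H^2+\kappa}$ — the companion branch $4E\ge 2H+\sqrt{4H^2+\kappa}$ is discarded because it forces $8HE+\kappa>0$ and hence negative roots, and the equality case $4E=2H-\sqrt{4H^2+\kappa}$ is the degenerate constant-$\sinh x$ (Clifford-torus) solution — which gives $t\in[t_1,t_2]$ with $t_{1,2}$ as stated. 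If $4H^2+\kappa<0$, $P$ is concave with $P(0)\ge0$, so $0$ lies between its two roots (their product is $\le0$), whence $\{P\le0\}\cap[0,+\infty)=[t_1,+\infty)$ with $t_1$ the larger root, which is the value written in (ii). If $4H^2+\kappa=0$ then $\kappa=-4H^2$ and $P(t)=4H(2E-H)t+4E^2$ is affine; $\{P\le0\}$ meets $[0,+\infty)$ if and only if the slope is negative, i.e. $E<H/2$, and then equals $[\,E^2/(H(H-2E)),+\infty)$. Finally, in every case $\cos\alpha$ vanishes exactly when the radicand inside it vanishes, i.e. when $P(t)=0$, i.e. when $t=\sinh^2 x$ equals the corresponding endpoint(s); since $x'=\cos\alpha$ by the first line of~\eqref{eq:sistema-Sl(2,R)}, this is the last assertion in each item.

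The main obstacle is the elimination in the first step: producing the closed forms for $\sin\alpha$ and $\cos\alpha$ requires careful bookkeeping of signs, both because $\kappa<0$ reverses several of the inequalities appearing in the Berger computation and because $C$ and $\cos\alpha$ need not be positive along the solution. A secondary, more conceptual point is the trichotomy in case (i): one must correctly decide which of the two roots of $\Delta=0$ bounds the admissible energies, and it is precisely the requirement $\sinh^2 x\ge 0$ (which excludes the branch with negative roots) that produces the one-sided inequality $4E<2H-\sqrt{4H^2+\kappa}$ rather than a two-sided constraint.
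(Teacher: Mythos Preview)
Your proof is correct and follows essentially the same route as the paper's: derive closed forms for $\sin\alpha$ and $\cos\alpha$ from the energy relation, then read off the constraint on $t=\sinh^2 x$ from the non-negativity of the radicand in $\cos\alpha$. Your polynomial $P(t)$ is precisely $\kappa$ times the paper's polynomial $p(t)=(1+4H^2/\kappa)t^2+(1+8HE/\kappa)t+4E^2/\kappa$, so your condition $P\le 0$ and the paper's $p\ge 0$ are equivalent; the paper then simply writes ``the result follows from the study of the sign of this polynomial for $t\ge 0$'', whereas you carry out that case analysis in full (including the exclusion of the branch $4E\ge 2H+\sqrt{4H^2+\kappa}$ in case~(i) via the sign of $8HE+\kappa$, which the paper leaves to the reader).
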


\begin{proof}
Using~\eqref{eq:energy-Sl(2,R)} we get that
\begin{equation}\label{eq:sin-alpha-sl}
\sin \alpha = \frac{1}{\mu}(E+H\sinh^2 x) \sqrt{1 - \frac{4\tau^2}{\kappa}\tanh^2 x}, \quad 
\cos \alpha = \frac{1}{\mu}\tau \sinh x \sqrt{1 + \frac{4}{\kappa} \frac{(E+H\sinh^2 x)^2}{\cosh^2 x \sinh^2 x}}
\end{equation}
where $\mu^2 = \tau^2 \sinh^2 x + (E+H\sinh^2 x)^2 \left[1 - \frac{4\tau^2}{\kappa}\left( \tanh^2 x - \frac{1}{\cosh^2 x}\right)\right]$.

From the above formula for $\cos \alpha$ we deduce that $p(\sinh^2 x) \geq 0$, where
\[
p(t) = \left(1 + \frac{4H^2}{\kappa} \right)t^2 + \left(1 + \frac{8HE}{\kappa}\right)t + \frac{4E^2}{\kappa}
\]
The result follows from the study of the sign of this polynomial for $t \geq 0$.
\end{proof}

Now we describe the complete solutions of~\eqref{eq:sistema-Sl(2,R)} in terms of the mean curvature $H$ and the energy $E$.

\begin{theorem}\label{tm:clasificacion-Sl(2,R)}
Let $\Sigma$ be a complete, connected, rotationally invariant surfaces with constant mean curvature $H$ and energy $E$ in $\Sl(2,R)$. Then $\Sigma$ must be one of the following types:
\begin{enumerate}
	\item[1.] If $4H^2 + \kappa > 0$ then
	\begin{enumerate}[(a)]
		\item If $E = 0$ then $\Sigma$ is a $2$-sphere. It is not always embedded (see figure~\ref{fig:sl-embebidas}).
		\item If $E > 0$ then $\Sigma$ is an unduloid-type surface.
		\item If $E < 0$ then $\Sigma$ is a nodoid-type surface which is always immersed.
	\end{enumerate}
	Moreover surfaces of type 1.(b) and 1.(c) are compact if and only if
	\[
		T(H, E) = 2\int_{x_1}^{x_2} \frac{(E+H\sinh^2 x)\sqrt{1-\frac{4\tau^2}{\kappa}\tanh^2 x}}{\tau \sqrt{\sinh^ 2 x \cosh^2 x + \frac{4}{\kappa}(E+H\sinh^ 2 x)^2}}
	\]
	is a rational multiple of $\pi$, where $x_j = \arcsinh \sqrt{t_j}$, $j = 1, 2$ (see Lemma~\ref{lm:restricciones-E-Sl(2,R)}.(i)). Moreover, surfaces of type 1.(b) are compact and embedded if and only if $T = 2\pi/k$ with $k \in \mathbb{Z}$.
	\item[2.] If $4H^2 + \kappa \leq 0$ then $\Sigma$ is immersed and non-compact. Moreover the curve $\gamma$ which generate $\Sigma$ is of the type of figure~\ref{fig:esfera-open} when $E = 0$, figure~\ref{fig:unduloide-open} when $E > 0$ and figure~\ref{fig:nodoide-open} when $E < 0$.
\end{enumerate}
\end{theorem}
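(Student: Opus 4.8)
The plan is to mirror the structure of the proof of Theorem~\ref{tm:clasificacion-berger}, using the first integral~\eqref{eq:energy-Sl(2,R)} together with Lemma~\ref{lm:restricciones-E-Sl(2,R)} to reduce the global classification to a study of the sign of $\alpha'$ and of $\mathrm{d}^2 x/\mathrm{d}y^2$ along the generating curve. First I would substitute~\eqref{eq:sin-alpha-sl} into the third equation of~\eqref{eq:sistema-Sl(2,R)} to obtain an explicit formula $\alpha'(s) = \tau^2 \tanh x\, q(\sinh^2 x)/\bigl(\cdots\bigr)$ analogous to~\eqref{eq:alpha'}--\eqref{eq:alpha'-polinomio}, where $q$ is a cubic polynomial in $t=\sinh^2 x$ whose leading coefficient is $\tfrac{H}{\kappa^2}(\kappa-4\tau^2)(4H^2+\kappa)$. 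The sign of this coefficient changes at $4H^2+\kappa=0$, which is precisely why the theorem splits into the two cases; I would record $q$ explicitly and, as in the Berger case, compute $\mathrm{d}^2 x/\mathrm{d}y^2$ via~\eqref{eq:derivative-x(y)}-type identities.

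\smallskip

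\noindent\textbf{Case 1 ($4H^2+\kappa>0$).} Here Lemma~\ref{lm:restricciones-E-Sl(2,R)}.(i) gives $\sinh^2 x\in[t_1,t_2]$ with $t_1=t_2$ exactly when $4E=2H-\sqrt{4H^2+\kappa}$; in that degenerate sub-case one integrates directly to get a vertical cylinder over a circle in $O$, which is a (flat) rotationally invariant surface — I would check whether this should be listed as a fourth type or whether the energy bound $4E<2H-\sqrt{4H^2+\kappa}$ is strict, in which case it is excluded. For $E=0$ one has $\sinh^2 x\in[0,t_2]$ with $t_1=0$ so the axis is reached; $\cos\alpha$ does not vanish in the interior, so $y$ becomes a function of $x$, $y'(x)$ integrates by a change of variable to a closed form like~\eqref{eq:def-y-sphere}, and reflecting across $x=0$ produces a sphere which is embedded iff $y(0)>-\pi$ (Corollary~\ref{cor:sphere-CMC-berger} has an exact analogue here). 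For $E>0$ one shows $q$ is strictly increasing on $[t_1,t_2]$ with $q(t_1)q(t_2)\le 0$, hence $\alpha$ has a unique interior critical point, $x(y)$ is increasing with a single inflection, and successive reflections yield an unduloid; for $E<0$ one takes initial data $(x_2,0,\pi/2)$, shows $\alpha'>0$ throughout so $\alpha$ sweeps $[\pi/2,3\pi/2]$, and the two graph pieces meeting at $\sinh^2 x=-E/H$ give a nodoid (always immersed, since the reflection lines never coincide). The period formula $T(H,E)$ is just $2\int_{x_1}^{x_2}y'(x)\,\mathrm{d}x$, and compactness/embeddedness follow exactly as in Theorem~\ref{tm:clasificacion-berger}(iii)--(iv).

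\smallskip

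\noindent\textbf{Case 2 ($4H^2+\kappa\le 0$).} Now by Lemma~\ref{lm:restricciones-E-Sl(2,R)}.(ii)--(iii) the range of $\sinh^2 x$ is an unbounded interval $[t_1,+\infty)$ (or $[E^2/H(H-2E),+\infty)$ when $4H^2+\kappa=0$), so $x$ is unbounded along the curve and the surface cannot be compact; one still shows $x'=0$ only at the single value $t_1$, so the generating curve has exactly one ``turning point'' and runs off to infinity on both sides, giving the non-compact immersed pictures of figures~\ref{fig:esfera-open},~\ref{fig:unduloide-open},~\ref{fig:nodoide-open} according to the sign of $E$ (the sign of $E$ controls whether $\sin\alpha$ vanishes in the interior, hence whether the ``waist'' is a genuine inflection as in the unduloid/nodoid dichotomy). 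That the leading coefficient of $q$ is now $\le 0$ is what forces $\alpha$ to be monotone and prevents the curve from closing up. The main obstacle, as in the Berger case, is the bookkeeping: verifying that the cubic $q$ has the asserted monotonicity and sign pattern on the relevant $t$-interval, and checking that $\mathrm{d}^2x/\mathrm{d}y^2$ changes sign exactly once (case 1) or not at all (case 2); since the paper only promises ``a sketch of the proofs'' for $\Sl(2,\R)$, I would carry out these sign computations in outline and invoke the verbatim parallel with Theorem~\ref{tm:clasificacion-berger} for the reflection/continuation arguments, the period integral, and the embeddedness criterion.
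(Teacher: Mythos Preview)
Your approach is essentially the same as the paper's. Both substitute the first integral into the ODE system, split on the sign of $4H^2+\kappa$ and then on the sign of $E$, and for Case~1 invoke the Berger arguments (sign of $\alpha'$, convexity of $x(y)$, reflection, period integral) almost verbatim; for Case~2 both use Lemma~\ref{lm:restricciones-E-Sl(2,R)}(ii)--(iii) to see that $x$ is unbounded with at most one turning point, hence the surface is non-compact. The paper's proof is actually sketchier than yours: it never writes out the analogue of the cubic $q$ and simply points back to the Berger case.

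One small correction in Case~2: for $E>0$ the paper shows $\alpha'$ changes sign once (positive on $(0,s_1)$, negative afterwards), so $\alpha$ is \emph{not} monotone; what happens is that $\alpha$ remains in $(\pi/2,\pi)$, keeping $\cos\alpha<0$ and forcing $x$ to decrease strictly to $-\infty$. Your primary non-compactness argument --- $x$ has a single turning point and is unbounded --- is correct and is exactly the one the paper uses; the ``leading coefficient of $q$ forces $\alpha$ monotone'' heuristic is not the actual mechanism. Your side remark about a possible cylinder at $4E = 2H-\sqrt{4H^2+\kappa}$ is a legitimate observation: it would yield a constant-$x$ torus analogous to the Clifford torus of Theorem~\ref{tm:clasificacion-berger}(ii), and the strict inequality in Lemma~\ref{lm:restricciones-E-Sl(2,R)}(i) silently excludes it from the statement.
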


\begin{remark}~\label{rm:clasificacion-Sl(2,R)}
\begin{enumerate}
	\item This theorem was first stated by Gorodsky in~\cite{G} for $\kappa = -4$ and $\tau = 1$. However he did not take into account that for $4H^2 +\kappa \leq 0$ there are not constant mean curvature spheres (otherwise, by the Daniel correspondence, see~\cite{D}, we were able to construct constant mean curvature spheres with $4H^2 - 1 \leq 0$ in $\mathbb{H}^2 \times \R$ which is a contradiction by~\cite[Corollary 5.2]{NR}).
	
	\item All the examples described in the above theorem can be lifted to the universal cover. Because the fiber in the universal cover is a line, not a circle, all the constant mean curvature spheres are embedded there. Moreover for $E \geq 0$ the surfaces are embedded too by the same reason. This classification has been obtained, very recently, by Espinoza~\cite{E}.
\end{enumerate}
\end{remark}

\begin{proof}
Firsly we are going to analyze $4H^2 + \kappa > 0$ because it is quite similar to the Berger sphere case. In this case, taking into account the previous lemma and that $H \geq 0$, $x(s)$ moves between two values $x_1 = -\arcsinh \sqrt{t_2}$ and $x_2 = -\arcsinh \sqrt{t_1}$. If $E = 0$ then $x_2 = 0$ and so the curve $\gamma$ may intersect the axis of rotation $\ell$. As $\cos \alpha > 0$ for $x(s) \in ]x_1 = -\arctanh(\sqrt{-\kappa}/2H), 0[$ we can express $y$ as a function of $x$. Now using~\eqref{eq:sin-alpha-sl} we get that
\[
y'(x) = \frac{H \tanh x \sqrt{1 - \frac{4\tau^2}{\kappa} \tanh^2 x}}{\tau \sqrt{1 + \frac{4H^2}{\kappa}\tanh^2 x}}
\]
And we can integrate explicitly this equation to obtain that
\begin{equation}\label{eq:def-sphere-sl}
y(x) = \arctan \left(\frac{\tau}{H} \rho(x)\right) - \frac{H}{\tau} \dfrac{\sqrt{4\tau^2 - \kappa}}{\sqrt{4H^2+\kappa}}\arctan \left(\dfrac{\sqrt{4\tau^2 - \kappa}}{\sqrt{4H^2+\kappa}} \rho(x)\right)
\end{equation}
where $\rho(x) = \left.\sqrt{1 + \frac{4H^2}{\kappa}\tanh^2 x}\right/\sqrt{1 - \frac{4\tau^2}{\kappa}\tanh^2 x}$. We note that $y(x_1) = 0$ where meets orthogonally the axis $\ell$ and $y(x)$ is a strictly increasing and strictly convex. The function $y(x)$ only describe half of the sphere, but we can obtain the whole sphere by reflection the solution along the line $x = 0$. It is easy to see that the sphere is embedded if, and only if, $y(0) > -\pi$ (see the figure~\ref{fig:sl-embebidas}).

\begin{figure}[htbp]
\centering
\includegraphics[width=0.5\textwidth]{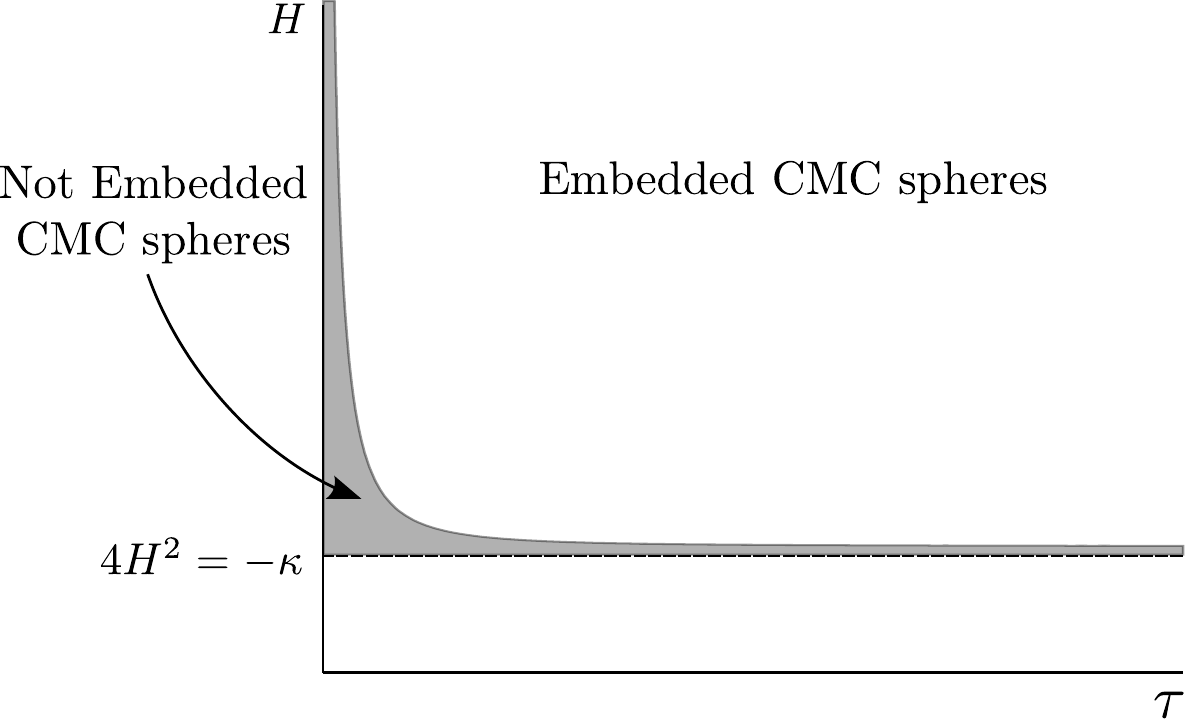}
\caption{Non-embedded region of CMC spheres in $\Sl(2,\R)$.\label{fig:sl-embebidas}}
\end{figure}

Now if $E > 0$ then $\sin \alpha > 0$ by~\eqref{eq:sin-alpha-sl} and so we can express $x$ as a function of $y$. A similar reasoning as in the Berger sphere case for $E > 0$ is sufficient to check that the surface must be a unduloid (see figure~\ref{fig:unduloide}). Finally if $E < 0$ then $\sin \alpha$ may change its sign. As in the Berger sphere case for $-H < E < 0$ we can express the curve $(x(s), y(s))$ as two graphs of the function $x(y)$. Hence it is straightforward to check that the situation is the same as in figure~\ref{fig:nodoide} and the surface must be a nodoid-type one. In both cases the surface is compact if and only if
\begin{equation}\label{eq:periodo-Sl(2,R)}
T(H,E) = 2\int_{x_1}^{x_2} y'(x) \df x = 2 \int_{x_1}^{x_2} \frac{(E+H\sinh^2 x)\sqrt{1-\frac{4\tau^2}{\kappa}\tanh^2 x}}{\tau \sqrt{\sinh^ 2 x \cosh^2 x + \frac{4}{\kappa}(E+H\sinh^ 2 x)^2}}
\end{equation}
is a rational multiple of $\pi$, where $x_j = \arcsinh \sqrt{t_j}$, $j = 1, 2$ (see Lemma~\ref{lm:restricciones-E-Sl(2,R)}).
\medskip

On the other hand the situation for $4H^2 + \kappa \leq 0$ is different from the above and it does not have a counterpart in the Berger sphere case. We firstly observe, by the previous lemma, that in this case $x(s)$ does not have to move between two real values. It is only bounded above by a constant that depend on $H$ and only vanish when $E = 0$ so the solution intersect the axis of rotation only in this case. Moreover, as $x'(s) = \cos \alpha(s)$ can only vanish once the solution cannot be periodic. We are going to distinguish between $E = 0$, $E > 0$ and $E < 0$ and we define for all the cases $x_1 = -\arcsinh\sqrt{t_1}$ when $4H^2 + \kappa < 0$ and $x_1 = -\arcsinh (\abs{E}/\sqrt{H(H-2E)})$ when $4H^2 + \kappa = 0$. Because we choose $H \geq 0$ it must be $x(s) \in ]-\infty, x_1]$. 

If $E = 0$ then we consider the maximal solution with inicial condition $(0, 0, \pi)$. In this case $\cos \alpha(s) < 0$ for any $s$ so we can express $y$ as a function of $x$. Then
\[
\frac{\mathrm{d}\,y}{\mathrm{d}\, x} = \frac{H \sinh x \sqrt{1 - \frac{4\tau^2}{\kappa}\tanh^2 x}}{\sqrt{1 + \frac{4H^2}{\kappa} \tanh^2 x}} < 0
\]
so the function $y$ is strictly decreasing. Moreover $\mathrm{d}^2\, y/\mathrm{d}\, x^2 > 0$ so the function $y$ is strictly convex. In figure~\ref{fig:esfera-open} we can see the two situations for $E = 0$.
\medskip

\begin{figure}[htbp]
\centering
\includegraphics{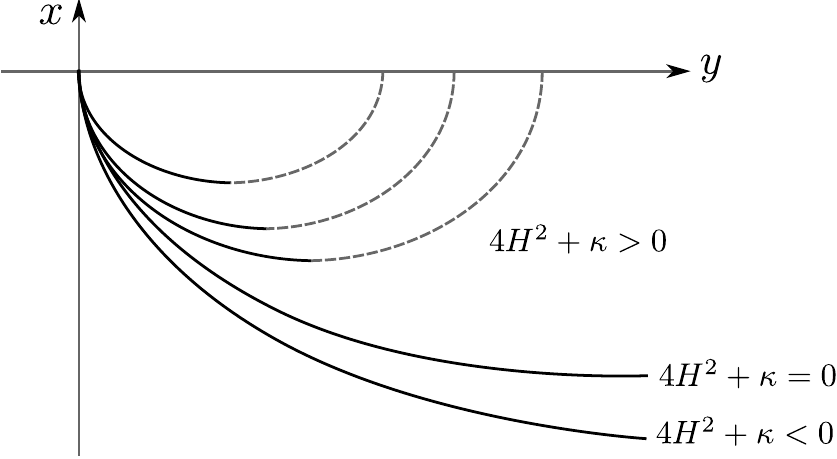}
\caption{Different solutions for $E = 0$ depending on the sign of $4H^2 + \kappa$\label{fig:esfera-open}}
\end{figure}

In the second case, that is for $E > 0$, we consider the maximal solution with initial conditions $(x_1, 0, \pi/2)$. Then there exists $s_1 > 0$ such that $\alpha'(s_1) = 0$, $\alpha'$ is positive for $s < s_1$ and negative for $s>s_1$. Hence, using that $\cos \alpha < 0$ and $\sin \alpha > 0$ by~\eqref{eq:sin-alpha-sl}, we get that $\alpha(s) \in ]\pi/2, \pi[$. We can express $x$ in terms of $y$ because $\sin \alpha > 0$ by~\eqref{eq:sin-alpha-sl} and
\begin{equation}\label{eq:dx/dy-Sl(2,R)}
\frac{\df x}{\df y} = \cot \alpha \cosh x < 0
\end{equation}
so $x$ is a strictly decreasing function of $y$ (see figure~\ref{fig:unduloide-open}).
\medskip

Finally when $E < 0$ we consider the maximal solution with initial condition $(x_1, 0, 3\pi/2)$. In this case $\sin \alpha$ could vanish so we can not express $x$ as a function of $y$. As $\alpha'$ is always negative let $s_1 > 0$ such that $\alpha(s_1) = \pi$. Then $\alpha \in ]\pi, 3\pi/2[$ on $s \in ]0, s_1[$ and $\alpha \in ]\pi/2, \pi[$ on $s > s_1$ because $\cos \alpha (s)$ does not vanish anymore. Now we can express the solution $\gamma$ as two graphs of the function $x(y)$ meeting at the line $y = y(s_1)$. First using~\eqref{eq:dx/dy-Sl(2,R)} $x(y)$ is strictly increasing on $]y(s_1), 0[$ and strictly decreasing on $]y(s_1), +\infty[$. Therefore the solution must be similar to the figure~\ref{fig:nodoide-open}. 
\begin{figure}[htbp]
\begin{minipage}[c]{.45\textwidth}
	\includegraphics[width=\textwidth]{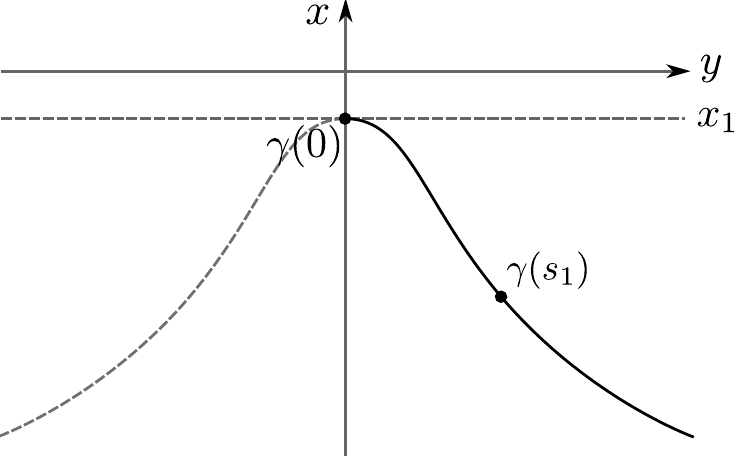}
	\caption{Curve $\gamma(s)$ for $4H^2 + \kappa \leq 0$ and $E > 0$\label{fig:unduloide-open}}
\end{minipage}
\hspace{0.05\textwidth}
\begin{minipage}[c]{0.45\textwidth}
	\includegraphics[width=\textwidth]{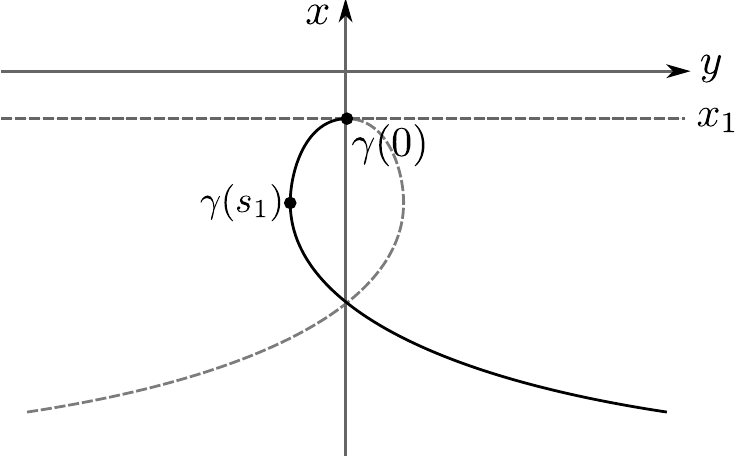}
	\caption{Curve $\gamma(s)$ for $4H^2 + \kappa \leq 0$ and $E < 0$\label{fig:nodoide-open}}
\end{minipage}
\end{figure}
\end{proof}

\begin{corollary}
Let $4H^2 + \kappa > 0$ and $\Phi:]-a,a[ \times ]-\pi, \pi[ \rightarrow \Sl(2,\R)$ be the immersion given by:
\[
 \Phi(x, t) = 
	\begin{cases}
	 \left(\cosh(x + a) e^{i y(x+a)}, \sinh(x + a)e^{i t} \right), & \text{if } x < 0 \\
	 \left(\cosh(a - x) e^{-i y(a - x)}, \sinh(a - x)e^{i t} \right), & \text{if } x\geq 0 \\
	\end{cases}
\]
where $a = \arctanh(\sqrt{-\kappa}/2H)$ and $y$ is the function defined in~\eqref{eq:def-sphere-sl}. Then $\Phi$ defines an immersion of a sphere with constant mean curvature $H$. Moreover $\Phi$ is an embbedding if and only if $y(0) > -\pi$ (see figure~\ref{fig:sl-embebidas}).
\end{corollary}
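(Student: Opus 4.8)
I would deduce this exactly as Corollary~\ref{cor:sphere-CMC-berger} is deduced from part~(i) of Theorem~\ref{tm:clasificacion-berger}, now using part~1.(a) of Theorem~\ref{tm:clasificacion-Sl(2,R)} and the computations in its proof. Recall from that proof that when $4H^2+\kappa>0$ and the energy vanishes, the complete rotationally invariant surface of constant mean curvature $H$ is a $2$-sphere $\Sigma$ whose generating curve $\gamma\subset O$ is, on the portion where $\cos\alpha>0$, the graph of the function $y$ of~\eqref{eq:def-sphere-sl} over the profile interval $]0,a[$, with $a=\arctanh(\sqrt{-\kappa}/2H)$; there $y$ is strictly monotone with $y(a)=0$, $\gamma$ meets the axis $\ell$ orthogonally as the profile coordinate tends to $0$, and $\gamma$ has a vertical tangent at the endpoint $\sigma=a$ (where $\sinh^2\sigma=t_2$). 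The whole curve $\gamma$ is obtained by reflecting this arc across $y=0$, which produces the second arc $\sigma\mapsto(\cosh\sigma\,e^{-iy(\sigma)},\sinh\sigma)$.

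The first step is to observe that lifting $\gamma$ through $\pi\colon\Sl(2,\R)\to O$ --- that is, restoring the $\rot$-orbit parameter $e^{it}$ on the $w$-component --- gives precisely the map $\Phi$ of the statement, under the reparametrization $\sigma=x+a$ on $]-a,0[$ and $\sigma=a-x$ on $[0,a[$. In this parametrization the fibre collapses exactly at $x=\pm a$ (there $\sigma=0$, hence $\sinh\sigma=0$), while $x=0$ corresponds to the vertical-tangent circle. I would then check that $\Phi$ is a smooth immersion of a sphere. On $]-a,0[\,\cup\,]0,a[$ this is immediate, since there $\Phi$ is the $\rot$-saturation of a regular solution curve of~\eqref{eq:sistema-Sl(2,R)} over the part of $O$ with $\sinh\sigma\neq0$, where $\pi$ is a circle bundle. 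At $x=\pm a$ one passes to the Cartesian fibre coordinates $(p,q)=(\sinh\sigma\cos t,\sinh\sigma\sin t)$ and uses that, near $\sigma=0$, $y$ is a smooth function of $\tanh^2\sigma$, hence of $\sinh^2\sigma=p^2+q^2$; then $\Phi(p,q)=(\sqrt{1+p^2+q^2}\,e^{iy(p^2+q^2)},\,p+iq)$ extends smoothly across the poles, and its differential there has rank $2$ (with image the fibre plane), so it is an immersion. At $x=0$ the two halves glue smoothly once the profile is reparametrized by arc length, as in Remark~\ref{rmk:properties-system-sphere}(iv), transversality to the now one-dimensional fibre persisting because $\sinh a\neq0$. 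Since $\Phi$ parametrizes the surface $\Sigma$ of Theorem~\ref{tm:clasificacion-Sl(2,R)}, it has constant mean curvature $H$.

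For the embeddedness statement I would argue that $\Phi$ is an embedding if and only if $\gamma$ is a simple curve. As $\sigma\mapsto\sinh\sigma$ is injective on $[0,a]$, a self-intersection can occur only between the two halves at a common value $\sigma$ of the profile, and this forces $e^{iy(\sigma)}=e^{-iy(\sigma)}$, i.e.\ $y(\sigma)\in\pi\mathbb{Z}$; similarly the two poles $(e^{iy(0)},0)$ and $(e^{-iy(0)},0)$ coincide exactly when $y(0)\in\pi\mathbb{Z}$. Since $y$ is strictly monotone on $[0,a]$ with $y(a)=0$ and $y(0)<0$ (read off from~\eqref{eq:def-sphere-sl}), the values of $y$ on $]0,a[$ fill the interval $]y(0),0[$, which meets $\pi\mathbb{Z}$ precisely when $y(0)<-\pi$, while the poles coincide precisely when $y(0)\le-\pi$. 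Hence $\gamma$ is simple --- equivalently $\Phi$ is an embedding --- if and only if $y(0)>-\pi$.

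The delicate point, which I expect to be the main obstacle, is precisely the verification that $\Phi$ is genuinely a smooth immersed sphere at the three exceptional parameter values: the two poles $x=\pm a$, where the displayed parametrization is a polar-type chart whose smoothness must be read off after passing to Cartesian fibre coordinates; and the equatorial circle $x=0$, where the explicit formula for $\Phi$ has an infinite $x$-derivative (because $y'$ blows up at the vertical-tangent point), so that one must reparametrize by arc length before the gluing and the immersion property become visible. Everything else --- the monotonicity of $y$, the sign $y(0)<0$, and the structure of $\gamma$ --- is already contained in the proof of Theorem~\ref{tm:clasificacion-Sl(2,R)}.
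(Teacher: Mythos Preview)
Your proposal is correct and follows exactly the route the paper intends: the corollary is stated without proof, as an immediate repackaging of the analysis carried out in the proof of Theorem~\ref{tm:clasificacion-Sl(2,R)}, part~1.(a), parallel to how Corollary~\ref{cor:sphere-CMC-berger} follows from Theorem~\ref{tm:clasificacion-berger}(i). You supply considerably more detail than the paper does --- in particular the verification of smoothness at the poles via Cartesian fibre coordinates and at the equatorial circle via arc-length reparametrization --- which is appropriate, since the paper simply takes these points for granted.

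One small slip: in your embeddedness argument you write that ``the poles coincide precisely when $y(0)\le-\pi$''. This is not literally true; the poles $(e^{iy(0)},0)$ and $(e^{-iy(0)},0)$ coincide exactly when $y(0)\in\pi\mathbb{Z}$, i.e.\ when $y(0)\in\{-\pi,-2\pi,\dots\}$. Your conclusion is unaffected, however, because whenever $y(0)<-\pi$ the open interval $]y(0),0[$ already contains $-\pi$ and hence forces a self-intersection on $]0,a[$, while at $y(0)=-\pi$ the poles do coincide. So the combined criterion still reads $y(0)>-\pi$, in agreement with the paper.
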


\section{The isoperimetric problem in the Berger spheres}

In~\cite{TU09-2} the authors studied the stability of constant mean curvature surfaces in the Berger spheres. They proved that for $1/3 \leq 4\tau^2/\kappa < 1$ the solution to the isoperimetric problem are the rotationally constant mean curvature $H$ spheres, $\mathcal{S}_H$. Besides they showed that there exist unstable constant mean curvature spheres for $\tau$ close to zero. Moreover, for $4\tau^2/\kappa < 1/3$ there exist stable constant mean curvature spheres and tori. The aim of this section is to study the relation between the area and the volumen of the rotatilonally constant mean curvature spheres in order to understand the isoperimetric problem for $4\tau^2/\kappa < 1/3$.
\medskip

We have given in Corollary~\ref{cor:sphere-CMC-berger} a parametrization of the constant mean curvature sphere $\mathcal{S}_H$. Then, using that parametrization, we define the interior domain of $\mathcal{S}_H$ as 
\[
\Omega_H = \{(z, w) \in \s^3:\, -y(\arccos\abs{z}) < \arg(z) < y(\arccos\abs{z})\}
\]
Hence one of the volumes determined by $\mathcal{S}_H$ is $\mathrm{vol}(\Omega_H)$ (note that this does not have to be the smaller one).

\begin{lemma}
The volume of $\Omega_H$ is given by:
\[
\mathrm{vol}(\Omega_H) = 
\begin{cases}
	\dfrac{16\pi \tau}{\kappa^2} \left(2\arctan\left( \frac{\tau}{H} \right) - \dfrac{\kappa H}{\tau(4H^2 + \kappa)} + \mu \arctan\left(\frac{\sqrt{4\tau^2 -\kappa}}{\sqrt{4H^2 + \kappa}} \right)\right), & \text{if } \kappa - 4\tau^2 < 0 \\
\\ 
	\dfrac{16\pi \tau}{\kappa^2} \left(2\arctan\left( \frac{\tau}{H} \right) - \dfrac{\kappa H}{\tau(4H^2 + \kappa)} + \mu\arctanh\left(\frac{\sqrt{\kappa - 4\tau^2}}{\sqrt{4H^2 + \kappa}} \right)\right), & \text{if } \kappa - 4\tau^2 > 0 \\	
\end{cases}
\]
where
\[
\mu = \frac{2H}{\tau}\frac{(\kappa - 4\tau^2)(2H^2 + \kappa) - 2\tau^2(4H^2 + \kappa)}{\sqrt{\abs{4\tau^2 - \kappa}}(4H^2 + \kappa)^{3/2}}.
\]
\end{lemma}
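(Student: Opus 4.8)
The plan is to compute $\mathrm{vol}(\Omega_H)$ by slicing the interior domain along the fibers of the Hopf fibration and reducing the triple integral to the single integral defining $y$ in~\eqref{eq:def-y-sphere}. First I would set up coordinates adapted to $\rot$: writing a point of $\s^3$ as $(z,w) = (\cos\theta\, e^{i\phi}, \sin\theta\, e^{it})$ with $\theta\in[0,\pi/2]$, $\phi\in(-\pi,\pi]$, $t\in(-\pi,\pi]$, the volume element of the Berger metric $g$ is $\df V_g = (4/\kappa)\sqrt{16\tau^2/\kappa^2}\,\cos\theta\sin\theta\,\df\theta\,\df\phi\,\df t$ up to the standard Jacobian for these "generalized Hopf" coordinates; the factor $(4/\kappa)^2 (4\tau^2/\kappa^2)^{1/2}$ comes from $|E^1|^2|E^2|^2|V|^2$ and the base factor $\cos\theta\sin\theta$ from the round metric on $\s^3$. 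The region $\Omega_H$ is, by its very definition, $\{|\phi| < y(\arccos|z|)\} = \{|\phi| < y(\theta)\}$ with no constraint on $t$, so the $t$-integral contributes a factor $2\pi$ and the $\phi$-integral contributes $2y(\theta)$.

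Thus $\mathrm{vol}(\Omega_H) = 2\pi \cdot (\text{const}) \int_0^{\pi/2} 2\,y(\theta)\cos\theta\sin\theta\,\df\theta$, where I should be careful to substitute $x = \arctan(\tan\theta)$—in fact $\theta$ and the variable $x$ of~\eqref{eq:def-y-sphere} coincide—and the upper limit $\theta = \pi/2$ corresponds to $x\to a = \arctan(\sqrt{\kappa}/2H)$ only if $y$ is extended by its constant value $y(0)$ for $\theta$ beyond the profile; more precisely one splits $\int_0^{\pi/2}$ according to whether $\cos\theta \gtrless \cos a$, but since $y(\theta)$ is defined on the whole profile and the sphere fills out $\theta\in[0,\pi/2]$ this is automatic. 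The key move is then \textbf{integration by parts}: writing $\cos\theta\sin\theta\,\df\theta = -\tfrac12\,\df(\cos^2\theta)$, we get $\int y(\theta)\cos\theta\sin\theta\,\df\theta = \bigl[-\tfrac12 y(\theta)\cos^2\theta\bigr] + \tfrac12\int y'(\theta)\cos^2\theta\,\df\theta$. The boundary term evaluates using $y(\text{axis}) = 0$ and the value $y(0)$ (which from~\eqref{eq:def-y-sphere} is $-\arctan(\tau/H) \mp (\ldots)\arctan(\text{or }\arctanh)(\ldots)$ at $x=0$, where $\lambda(0)=1$), and $y'(\theta) = y'(x)$ is the explicit rational-trigonometric expression displayed just before~\eqref{eq:def-y-sphere}.

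After substituting $y'(x) = \frac{H}{\tau}\tan x \frac{\sqrt{1+\frac{4\tau^2}{\kappa}\tan^2 x}}{\sqrt{1-\frac{4H^2}{\kappa}\tan^2 x}}$ and $\cos^2 x$, the remaining integral $\int_0^a y'(x)\cos^2 x\,\df x$ is an elementary (if lengthy) integral in $\tan^2 x$; the same substitution $u = \lambda(x)$ that linearizes the computation of $y(x)$ will again rationalize it, producing the three terms $\arctan(\tau/H)$, the rational term $-\kappa H/(\tau(4H^2+\kappa))$, and the $\mu$-term with its case split on the sign of $\kappa - 4\tau^2$ (the $\arctan$ versus $\arctanh$ dichotomy being exactly the one already visible in~\eqref{eq:def-y-sphere} and in Proposition~\ref{prop:area-esferas}). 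Collecting the prefactor $2\pi\cdot(4/\kappa)^2\cdot(4\tau/\kappa)\cdot\tfrac12 = 16\pi\tau/\kappa^2$ against the bracketed expression yields the stated formula. The main obstacle I anticipate is purely computational: correctly tracking the boundary term from the integration by parts and verifying that the combination of $\arctan(\tau/H)$ coming from the boundary and from the bulk integral assembles into the single coefficient $2$ in front of $\arctan(\tau/H)$, together with checking the somewhat delicate algebra that collapses the bulk integral's rational and inverse-trigonometric pieces into precisely $-\kappa H/(\tau(4H^2+\kappa))$ and the coefficient $\mu$; there is no conceptual difficulty, only bookkeeping, and a symbolic check (as the paper's frequent "straightforward computation" suggests the author performed) is the sensible way to confirm it.
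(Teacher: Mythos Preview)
Your approach is essentially the paper's: both reduce $\mathrm{vol}(\Omega_H)$ to the single integral $\displaystyle\frac{64\pi\tau}{\kappa^2}\int_0^{a} y(\theta)\sin\theta\cos\theta\,\df\theta$ (the paper reaches it via the co-area formula for $f(z,w)=\arccos|z|$, you via explicit Hopf-type coordinates, which amounts to the same thing), and the paper then simply says ``a long but straightforward computation'' for the evaluation --- your integration-by-parts followed by the substitution $u=\lambda(x)$ is a correct way to execute that step. Two minor slips to clean up: the upper limit of the $\theta$-integral is $a=\arctan(\sqrt{\kappa}/2H)$, not $\pi/2$ (the domain $\Omega_H$ is empty for $\theta>a$, so your hedging about ``extending $y$'' is unnecessary), and your prefactor arithmetic $2\pi\cdot(4/\kappa)^2\cdot(4\tau/\kappa)\cdot\tfrac12$ does not equal $16\pi\tau/\kappa^2$; the Berger-to-round volume ratio is $16\tau/\kappa^2$ as you first wrote, and the extra factor of $4$ appears only after the integral itself is evaluated.
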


\begin{proof}
Firstly it is easy to see that, because the symmetry of the sphere, we can restrict ourselves to the domain $\Omega_H^+ = \{(z, w) \in \s^3:\, \arg(z) < y(\arccos \abs{z})\}$ so $\mathrm{vol}(\Omega_H) = 2\mathrm{vol}(\Omega_H^+)$. Secondly the volume form $\omega_b$ of $\s^3(\kappa, \tau)$ and $\omega$ of $\s^3$ are related by $\omega_b = \frac{16\tau}{\kappa^2}\omega$. Hence it is sufficient to calculate the volume of $\Omega_H^+$ with respecto to the standar metric on the sphere. 

We are going to apply the co-area formula using the function $f(z, w) = \arccos\abs{z}$ which asign to the point $(z, w)$ the distant to the curve $\ell = \{(z, 0) \in \s^3\}$, which is the axis of rotation of the group $\mathrm{Rot}$. Then
\[
\mathrm{vol}(\Omega_H) = 2\int_{\Omega_H^+} \omega_b = \frac{32\tau}{\kappa^2}\int_{\Omega_H^+} \omega = \frac{32\tau}{\kappa^2}\int_\R \left( \int_{\Gamma_t \cap \Omega_H^+} \omega_t\right) \df t
\]
where $\omega_t$ is the restriction of the form $\omega$ to $\Gamma_t = f^{-1}(\{t\})$ and we have taken into account that $\abs{\nabla f} = 1$. Now we can parametrize $\Gamma_t \cap \Omega_H^+$ as $\varphi:[0, y(t)] \times [0, 2\pi] \rightarrow \Gamma_t \cap \Omega_H^+$, $\varphi(u, \theta) = \bigl(\cos t e^{iu}, \sin t e^{i\theta} \bigr)$. We note that $\Gamma_t \cap \Omega_H^+ = \emptyset$ for $t > \arctan(\sqrt{\kappa}/2H)$ or $t < 0$. Hence the above integral can be rewritten as
\[
\begin{split}
\mathrm{vol}(\Omega_H) &= \frac{32\tau}{\kappa^2}\int_{0}^{\arctan(\sqrt{\kappa}/2H)} \left(\int_{[0, y(t)] \times [0, 2\pi]} \varphi^*(\omega_t) \right) \df t = \\
&= \frac{16\pi\tau}{\kappa^2} 4 \int_{0}^{\arctan(\sqrt{\kappa}/2H)} \sin t\, \cos t\, y(t) \df t
\end{split}
\]
Finally a long but straightforward computation yields the above integral and the result.
\end{proof}

Now we are able to draw the area of $\mathcal{S}_H$ in terms of its volume. We are going to compare it with the tori $\mathcal{T}_H$ (see figure~\ref{fig:perfil-esfera-toro}) because for $\tau$ close to zero there are stable constant mean curvature spheres and tori (see~\cite{TU09-2}) so both surfaces are candidates so solve the isoperimetric problem. The area and the smallest volume enclose by $\mathcal{T}_H$  are given by:
\[
\mathrm{Area}(\mathcal{T}_H) = \frac{4\tau}{\kappa}\frac{4\pi^2}{\sqrt{4H^2 + \kappa}},\qquad \text{Volume enclosed by } \mathcal{T}_H = \frac{16\pi^2\tau}{\kappa^2}\left(1 - \frac{2H}{\sqrt{4H^2 + \kappa}} \right)
\]

 \begin{figure}[htbp]
        \begin{minipage}[c]{0.4\textwidth}
         \centering
                \includegraphics[width=\textwidth]{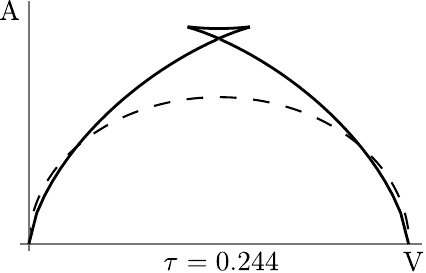}
        \end{minipage}
        \hspace{0.1\textwidth}
        \begin{minipage}[c]{0.4\textwidth}
         \centering
        \includegraphics[width=\textwidth]{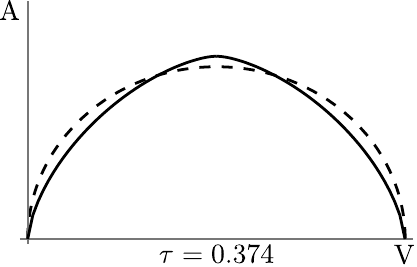}
        \end{minipage}
        \\[0.5cm]
        \begin{minipage}[c]{0.4\textwidth}
         \centering
                \includegraphics[width=\textwidth]{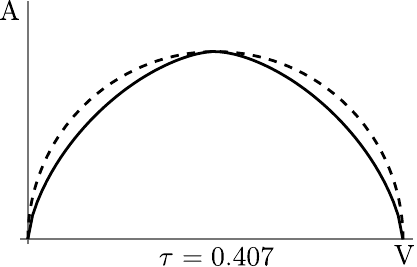}
        \end{minipage}
        \hspace{0.1\textwidth}
        \begin{minipage}[c]{0.4\textwidth}
         \centering
                \includegraphics[width=\textwidth]{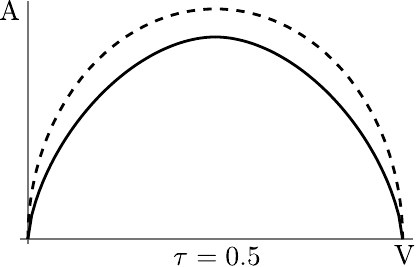}
        \end{minipage}
        \caption{Graphics of the area of the CMC spheres (solid line) and CMC
tori (dashed line) in terms of the volume for diferent Berger spheres ($\kappa = 4$).\label{fig:perfil-esfera-toro}}
 \end{figure}

We can fix, without lost of generality, $\kappa = 4$. Figure~\ref{fig:perfil-esfera-toro} shows the four different situation that appears in the Berger spheres: for $\tau = 0.5$ the spheres are the best candidates to solve the isoperimetric problem, for $\tau = 0.407$ the minimal Clifford torus has the same area and volumen that the minimal sphere so both are candidates to solve the isoperimetric problem. For $\tau = 0.374$ and $\tau = 0.244$ (in the last case there are unsatable spheres and non-congruent spheres enclosing the same volume) it appears an open interval centered at $\pi^2 16 \tau/\kappa^2$ such that the tori $\mathcal{T}_H$ are the candidates to solve the isoperimetric problem.



\begin{thebibliography}{1}
  \bibliographystyle{alpha}
	

\bibitem[ARa]{ARa}
U.~Abresch and H.~Rosenberg.
\newblock A Hopf differential for
constant mean curvature surfaces in $\s^2\times\R$ and
$\h^2\times\R$.
\newblock {\em Acta Math.} {\bf 193}, 141--174 (2004).

\bibitem[ARb]{ARb}
U.~Abresch. and H.~Rosenberg.
\newblock Generalized Hopf differentials.
\newblock {\em Mat. Contemp.}  {\bf 28}, 1--28 (2005).

\bibitem[CPR]{CPR}
R.~Caddeo, P.~Piu and A.~Ratto.
\newblock SO(2)-invariant minimal and constant mean curvature surfaces in $3$-dimensional homogeneous spaces.
\newblock {\em Manuscripta Math.} {\bf 87}, 1--12 (1995)

\bibitem[D]{D}
B.~Daniel.
\newblock Isometric immersions into 3-dimensional homogeneous manifolds.
\newblock {\em Comment. Math. Helv.} {\bf 82}, 87--131 (2007).


\bibitem[ER]{ER}
J. M.~Espinar and H.~Rosenberg.
\newblock Complete constant mean curvature surfaces in homogeneous spaces.
\newblock To appear in {\em Comm. Math. Helvetici}.

\bibitem[E]{E}
C.~Espinoza.
\newblock Rotational and Parabolic Surfaces in $\widetilde{PSL}_2(\R,\tau)$ and Applications.
\newblock Preprint. arXiv: 0911.2213v1 [math.DG]

\bibitem[dCF]{dCF}
M.~do Carmo, I.~Fernández.
\newblock A Hopf theorem for open surfaces in product spaces.
\newblock{\em Forum Math.} {\bf 21}, 951--963 (2009)

\bibitem[FM]{FM}
I.~Fernández and P.~Mira.
\newblock A characterization of constant mean curvature surfaces in homogeneous $3$-manifolds.
\newblock{\em Differential geometry and its applications.} {\bf 25}, 281--289 (2007).

\bibitem[FMP]{FMP}
C.~B.~Figueroa, F.~Mercuri and R.H.L.~Pedrosa.
\newblock Invariant surfaces of the Heisenberg groups.
\newblock{\em Ann. Mat. Pura Appl.}{\bf 177}, 173--194 (1999) 
	
\bibitem[G]{G}
C.~Gorodski.
\newblock Delaunay-type surfaces in the $2\times 2$ real unimodular group.
\newblock {\em Annali di Matematica.} {\bf 180}, 211--221 (2001).

\bibitem[H]{H}
W. Y.~Hsiang.
\newblock On generalization of theorems of A. D. alexandrov and C. Delaunay on
  hypersurfaces of constant mean curvature.
\newblock {\em Duke Math. J.}, {\bf 49}(3), 485--496 (1982).

\bibitem[HH]{HH89}
W. Y.~Hsiang, W. T.~Hsiang.
\newblock On the uniqueness of isoperimetric solutions and imbedded soap bubbles in noncompact symmetric spaces.
\newblock {\em Invent. Math.}, {\bf 98}(1), 39--58 (1989)

\bibitem[HR]{HR}
A.~Hurtado and C.~Rosales.
\newblock Area-stationary surfaces inside the sub-riemannian three-sphere.
\newblock {\em Math. Ann.}, {\bf 340}(3), 675--708 (2008).

\bibitem[NR]{NR}
B.~Nelli and H.~Rosenberg.
\newblock Global properties of constant mean curvature surfaces in $\Bbb H^2\times\Bbb R$.
\newblock {\em Pacific J. Math.}, {\bf 226}(1), 137--152 (2006).

\bibitem[P]{Pedrosa04}
R.~Pedrosa.
\newblock The isoperimetric problem in spherical cylinders.
\newblock {\em Ann. Global Anal. Geom.}, {\bf 26}(4), 333--354 (2004)

\bibitem[PR]{PR99}
R.~H.~L.~Pedrosa and M.~Ritor{\'{e}}.
\newblock Isoperimetric domains in the riemannian product of a circle with a
  simply connected space form and applications to free boundary problems.
\newblock {\em Indiana Univ. Math. J.}, {\bf 48}(4), 1357--1394 (1999).

\bibitem[To]{To}
P.~Tomter.
\newblock Constant mean curvature surfaces in the Heisenberg group.
\newblock {\em Proc. Sympos. Pure Math.} {\bf 54}, 485--495 (1993).

\bibitem[TU]{TU09-2}
F.~Torralbo and F.~Urbano.
\newblock Compact stable constant mean curvature surfaces in the berger
  spheres.
\newblock Preprint. arXiv:0906.1439v1 [math.DG].
\end{thebibliography}
\end{document}